\documentclass[dvipsnames,12pt]{amsart}
\usepackage[utf8]{inputenc}
\usepackage[margin=1.25in,marginparwidth=1in]{geometry}
\usepackage{mathrsfs}

\usepackage{booktabs} 
\usepackage{setspace} 
\usepackage{cancel}   
\usepackage{enumitem} 
\usepackage{lipsum}   
\usepackage{etoolbox} 
\usepackage[margin=2cm]{caption} 
\usepackage{multirow} 
\usepackage{mathtools}

\usepackage{bbm}
\usepackage{amssymb,amsmath,amsthm}
\usepackage[notref, notcite, final]{showkeys}
\usepackage{todonotes}

\allowdisplaybreaks 

\usepackage[colorlinks, bookmarksnumbered, bookmarks]{hyperref} 

\usepackage{xcolor}

\usepackage{comment}

\theoremstyle{definition} 
\newtheorem{theorem}{Theorem}[section] 
\newtheorem{def_temp}[theorem]{Definition}
\newtheorem{question}{Question}

\newtheorem{lemma}[theorem]{Lemma}
\newtheorem{proposition}[theorem]{Proposition}
\newtheorem{corollary}[theorem]{Corollary}

\newtheorem{eg_temp}[theorem]{Example}
\newtheorem{rmk_temp}[theorem]{Remark}
\numberwithin{equation}{section} 

  \newtheorem{claim}{Claim}[theorem]
  
\newenvironment{claimproof}[1][Proof of Claim]{\noindent \underline{#1.} }{\hfill$\diamondsuit$}

\newenvironment{remark}
  {\pushQED{\qed}\begin{rmk_temp}}
  {\popQED\end{rmk_temp}}
\newenvironment{example}
  {\pushQED{\qed}\begin{eg_temp}}
  {\popQED\end{eg_temp}}
\newenvironment{definition}
  {\pushQED{\qed}\begin{def_temp}}
  {\popQED\end{def_temp}}

\definecolor{maroon}{rgb}{0.35,0,0}

\newcommand{\N}{\mathbb{N}}
\newcommand{\Z}{\mathbb{Z}}
\newcommand{\R}{\mathbb{R}}
\newcommand{\C}{\mathbb{C}}

\newcommand{\eps}{{\varepsilon}}

\newcommand{\vect}[1]{\boldsymbol #1}
\newcommand{\vectx}{{\vect x}}

\newcommand{\sltwor}{\mathsf{SL}(2,\R)}
\DeclareMathOperator{\trace}{trace}
\DeclareMathOperator{\Var}{Var}
\DeclareMathOperator{\spectrum}{spec}

\DeclareMathOperator{\lcm}{lcm}
\newcommand{\polynomial}{{\operatorname{poly}}}

\newcommand{\scrK}{{\mathscr{K}}}

\let\oldsqrt\sqrt
\def\sqrt{\mathpalette\DHLhksqrt}
\def\DHLhksqrt#1#2{%
\setbox0=\hbox{$#1\oldsqrt{#2\,}$}\dimen0=\ht0
\advance\dimen0-0.2\ht0
\setbox2=\hbox{\vrule height\ht0 depth -\dimen0}%
{\box0\lower0.4pt\box2}}

\AtBeginDocument{

}

\title[Gap-Rich Sets]{On the Identification of Certain Classes of \\ Gap-Rich Sets}

\author[J. Fillman]{Jake Fillman} 
\author[Y.-S. Lim]{Yi-Sheng Lim}

\address{ Department of Mathematics, Texas A\&M University, College Station, TX 77843}
\email{\href{mailto:fillman@tamu.edu}{fillman@tamu.edu}}

\email{\href{mailto:yishenglimysl@tamu.edu}{yishenglimysl@tamu.edu}}

\date{}

\begin{document}

\begin{abstract} 
The notion of gap-richness was recently introduced to quantify the ability to open spectral gaps for periodic word models by arbitrarily small perturbations and is useful in the construction of almost-periodic operators having exotic spectral properties, such as Cantor spectrum of zero Hausdorff dimension.
Given a subset of Euclidean space that is locally dense in the Zariski topology, we show that any image of such a set under a nonconstant polynomial is gap-rich.
This generalizes the scope of previous work in several ways, including allowing local correlations into potential blocks and allowing for totally disconnected sets of realizations.
\end{abstract}

\maketitle

 \vskip 0.5cm
    
    {\bf Keywords:} Limit-periodic $\cdot$ Schr\"odinger equation $\cdot$ Spectral theory

    \vskip 0.5cm

    {\bf Mathematics Subject Classification (2020):}
    {35P15, 35C20, 74B05, 74Q05.}

\onehalfspacing

\hypersetup{
	linkcolor={black!30!blue},
	citecolor={black!10!red},
	urlcolor={black!30!green}
}

\section{Introduction}\label{sec:intro}

This work is about spectral gaps for discrete periodic Schr\"odinger operators.
To set the stage, we recall that a
discrete Schr\"odinger operator $H_V:\ell^2(\Z) \to \ell^2(\Z)$ with (bounded) potential $V:\Z \to \R$
is given by
\begin{equation}
    [H_V\psi]_n
    = \psi_{n-1} + \psi_{n+1} + V_n \psi_n.
\end{equation}
These supply tight-binding models of a quantum particle subjected to an external potential energy, $V$. Broadly speaking, one is interested in what features of a physical model can be extracted solely from the large-scale \emph{structure} of said model, rather than specific local features.
In the current context, one often studies \emph{ergodic models} in which the potential $(V_n)_{n \in \Z}$ is generated by a dynamical system, and thus the question of interest can be phrased as asking whether similar phenomena hold for ergodic families regardless of the exact method by which they are generated from the dynamical system in question.
For instance, the \emph{Anderson model}, in which $(V_n)_{n \in \Z}$ is a sequence of independent identically distributed random variables exhibits spectral localization (pure point spectrum with exponentially decaying eigenfunctions) for any choice of randomness in 1D; compare \cite{CarKleMar1987CMP, KunSou1980CMP}.
The \emph{trimmed Anderson model} \cite{ElgartKlein2014JST, ElgartSodin2017JST} introduces randomness only on a sub-lattice of sites and still exhibits localization; more generally,
for \emph{random word models}, localization is essentially universal: any ``nontrivial'' word model (that is any word model whose realizations are not simply periodic sequences) exhibits spectral localization and dynamical localization \cite{DamSimSto2004JFA, Rangamani2022AHP}.
In fact, the randomness does not even need to be identical at every site. Under relatively mild restrictions, localization holds for potentials given by independent random variables \cite{GK2026CAMS}.
In a similar vein, word models defined by suitable aperiodic subshifts tend to exhibit the same features that are observed in the single-site model: singular continuous zero-measure Cantor spectrum \cite{DamFilGohlke2022JST}.
It is of interest to see whether any other models exhibit such universal features.

Building on work of Avila \cite{Avila2009CMP} and some later simplifications in the regime of vanishing Lyapunov exponents \cite{DFL2017JST}, one such result for limit-periodic operators was shown in \cite{FGH2026JFA}, up to a spectral gap condition: if $X \subseteq \R^n$ is gap-rich, then a generic limit-periodic function $\Z \to X$ produces a Schr\"odinger operator with purely singular continuous spectral type supported on a spectrum that is a Cantor set of zero Hausdorff dimension.
We say that $X$ is gap-rich if for any energy away from a discrete set, any periodic operator whose potential is a concatenation of elements of $X$ can be perturbed to another periodic concatenation of elements of $X$ to open a spectral gap at the energy in question; see Definition~\ref{defn:schro_transfer_matrices} for the precise formulation.

Thus, up to the spectral gap condition described above, the generic features of limit-periodic operators observed in \cite{Avila2009CMP} are universal in the sense that any choice of ``building blocks'' produces the same generic spectral behavior.
As mentioned above, the proof relies on Avila's perturb-and-grow scheme from \cite{Avila2009CMP}, which has been implemented in various other scenarios such as  quantum walks \cite{FilOng2017JFA}.
For further background and results, we point the reader to the survey article \cite{DF2020LPSurvey}.

The results of \cite{FGH2026JFA} facilitate the identification of gap-rich sets among suitable algebraic subsets of $\R^n$ on which one can verify an explicit criterion.
While this is a significant advantage over previous works, there are approximately three nontrivial drawbacks, all of which we rectify in the current work:
\begin{itemize}
    \item The sets of admissible periodic potentials must be drawn from a ``nice'' set, where ``nice'' roughly means ``the image of a  vector-valued analytic function whose domain is a Banach space''. In particular, totally disconnected sets were out of reach of the ideas discussed in the previous work.
    \item While the ``non-ahyperbolicity'' criterion is simple in practice to check for each \emph{specific} instance, it still must be verified by hand for each example, usually by suitable \emph{ad hoc} techniques that do not readily generalize to other classes.
    Thus, the previous work provided no \emph{systematic} way to apply the techniques to broader classes of examples.
    Indeed, the reader can see that the examples worked out earlier each require simple (nevertheless distinct) computations in order to complete the desired analysis.
    \item The computations and ideas of \cite{FGH2026JFA} could mostly only be applied fruitfully to sets of the form $X = \prod K_j$ with each $K_j$ a closed subset of $\R^1$.
    For sets in which the potentials are ``correlated'' (e.g.\ if $K$ is an affine subspace of $\R^k$), the direct computations needed to verify the relevant inputs can very quickly become intractable. 
    An instructive example is supplied by the line  $x_1/a_1 = \cdots = x_k/a_k$ in $\R^k$, where each $a_k \neq 0$. If every $a_i = 1$, then the relevant monodromies are simply the $k$th powers of the basic transfer matrices, and there is no real obstacle.
    However, as soon as some $a_i$ is not $1$ and $k$ is not very small, then the computations of the previous work become difficult to implement.
\end{itemize}

In this work, we broaden the scope of previous results substantially by combining existing analysis with some tools from algebra in order to overcome the  obstacles enumerated above.
To formulate the theorem, we recall that the Zariski topology on $\R^n$ refers to the weakest topology on $\R^n$ for which all polynomial functions $\R^n \to \R$ are continuous, and we say that a set $K \subseteq \R^n$ is \emph{locally Zariski-dense} (LZD) if   $K \cap B(x,\eps)$ is dense in $\R^n$ in the Zariski topology  for every $x \in K$ and $\eps>0$, where $B(x,\eps)$ denotes the ball of radius $\eps$ centered at $x$ in the Euclidean metric. 
We denote by $\polynomial(\R^n, \R^m) \subseteq C(\R^n, \R^m)$ the set of all polynomial functions $\R^n \to \R^m$, that is, those functions $p:\R^n \to \R^m$ where $p = (p_1,\ldots,p_m)$ and each $p_i : \R^n \rightarrow \R$ is a polynomial function with real coefficients. For the $m=1$ case we shall write\footnote{Here we make a minor point that we view polynomials in $\polynomial(\R^n)$ as \emph{functions}, that is, as elements of $C(\R^n)$, not as abstract algebraic objects, which is why we eschew the traditional algebraic notation $\R[x_1,\ldots,x_n]$.}  $\polynomial(\R^n, \R) = \polynomial(\R^n)$ and $C(\R^n;\R) = C(\R^n)$.

\begin{theorem}\label{t:main}
    Let $n,m \in \N$ and $\ell \in \N_0 = \N \cup \{0\}$. If $K \subseteq \R^n$ is LZD,  $p \in \polynomial(\R^n,\R^m)$ is nonconstant, and $F \subseteq \R^\ell$ is finite, then ${p[K]} \times F \subseteq \R^{m+\ell}$ is gap-rich.
\end{theorem}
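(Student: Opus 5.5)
The plan is to reduce gap-richness of $p[K]\times F$ to an algebraic non-degeneracy statement that can be checked on a Zariski-dense set, and then to obtain it from the Zariski-density of small pieces of $K$.

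Fix a periodic concatenation $w = (x^{(1)},\dots,x^{(N)})$ with $x^{(j)} = (p(k_j), f_j)\in p[K]\times F$, and an energy $E$ outside a discrete exceptional set. We need a perturbation $\tilde w$, built from nearby elements of $p[K]\times F$ and possibly concatenated to a longer period, that opens a gap at $E$. The natural object is the monodromy
\[
M_E(y_1,\dots,y_N) = \prod_j A_E(p(y_j),f_j),
\]
viewed as a function of $(y_1,\dots,y_N)\in(\R^n)^N$. Each entry is a polynomial in the $y$'s and in $E$, because transfer matrices are polynomial in the potential values and $p$ is polynomial. A gap opens at $E$ exactly when $|\trace M_E|>2$, possibly after passing to a power or a concatenated period.

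The argument would proceed in three steps. First, if $w$ already has $|\trace M_E|\ne 2$ at $E$, there is nothing to do, or we use the criterion from \cite{FGH2026JFA} directly. The delicate case is when $E$ lies in the spectrum with $|\trace| \le 2$. The key input is then a non-ahyperbolicity criterion of the type in \cite{FGH2026JFA}: it suffices to find perturbations for which $y\mapsto \trace M_E(y)$ is not locally constant. More precisely, the criterion asks that the image of the relevant family of monodromies is not contained in a set conjugate to $\mathsf{SO}(2)$ or to the parabolic/identity locus. Second, the LZD hypothesis transfers this from $\R^n$ to $K$. The product $\prod_j (K\cap B(k_j,\eps))$ is Zariski-dense in $(\R^n)^N$, since a product of Zariski-dense sets is Zariski-dense. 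So if a polynomial identity such as $\trace M_E(y)\equiv c$ failed on all of $(\R^n)^N$, it also fails at some $y$ with $y_j\in K$ arbitrarily close to $k_j$. Where a strict inequality is needed, for instance $|\trace|>2$, we obtain it from a point in $(\R^n)^N$ where the polynomial $\trace^2-4$ is positive near $(k_j)$. Third, we verify the statement on $(\R^n)^N$ itself. Since $p$ is nonconstant, each $p\circ(\cdot)$ restricted to a generic line is a nonconstant one-variable polynomial, so its image contains an interval. This reduces to the case where each block of the potential varies over a curve of potentials, an interval's worth along a polynomial curve. For such analytic one-parameter families, the ahyperbolicity obstruction can occur only for $E$ in a discrete set; this is essentially the content of the earlier work for sets of the form $\prod K_j$ with intervals. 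The finite factor $F$ is carried along unchanged: it only fixes some coordinates of each block and does not affect the polynomial dependence on $y$.

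I expect the main obstacle to be the second-to-third step interface. Gap opening needs an open condition, $|\trace|>2$, realized near $(k_j)$, whereas Zariski density only guarantees that we avoid proper algebraic subsets. Polynomial nonvanishing alone does not give a sign. The fix I would try first is to phrase the needed property entirely as avoidance of a proper real-algebraic set. Concretely, I would show that the set of $y$ near $(k_j)$ where the monodromy of some fixed concatenation (e.g.\ $w$ followed by a perturbed copy) is elliptic with a rotation angle in a specific rational set, or is parabolic, is contained in a proper algebraic subset. Then I would invoke the perturb-and-grow mechanism: a non-constant trace near an elliptic or parabolic point allows choosing a longer period whose monodromy is hyperbolic at $E$. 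Consequently, avoiding the algebraic set suffices, and LZD provides such points in $K$ arbitrarily close to the original blocks.
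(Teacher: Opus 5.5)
There is a genuine gap, and it sits exactly where you flag the main obstacle. Your transfer step is correct and is what the paper uses: a polynomial on $(\R^n)^N$ that is not identically zero cannot vanish on $\prod_j (K\cap B(k_j,\delta))$. The problem is the mechanism you propose for turning ``avoidance of a proper algebraic set'' into hyperbolicity. A non-constant trace near an elliptic monodromy $M_E(k)$, with $k=(k_1,\dots,k_N)$, does not by itself give a hyperbolic longer period. Suppose the nearby monodromies you can reach lie in the same conjugate of $\mathsf{SO}(2)$ as $M_E(k)$. Then their traces vary, but they commute with $M_E(k)$, and every product of them is elliptic or $\pm I$. Excluding finitely many rational rotation angles does not change this. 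The perturb-and-grow freedom of the limit-periodic setting is also unavailable, since you may only move within $p[K]\times F$.

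The quantity to test algebraically is the commutator, and it resolves the tension you noticed because non-commutativity is a non-vanishing condition:
\begin{itemize}
\item The entries of $y\mapsto[M_E(y),M_E(k)]$ are polynomials on $(\R^n)^N$ that vanish at $y=k$.
\item They are not all identically zero as soon as \emph{some} $\hat y\in(\R^n)^N$ makes $M_E(\hat y)$ hyperbolic, because an elliptic and a hyperbolic matrix never commute.
\item LZD then gives $y\in K^N$ arbitrarily close to $k$ with $[M_E(y),M_E(k)]\neq 0$ and, by openness, $M_E(y)$ elliptic.
\item Lemma~\ref{lem:hyperbolic_magic} (two non-commuting elliptic elements of $\sltwor$ generate a semigroup containing a hyperbolic element) yields a hyperbolic concatenation of copies of $w$ and of the perturbed block, and this word is $\eps$-close to $w$.
\end{itemize}
The parabolic case reduces to this one via non-constancy of the trace, as you indicate. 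Your first step also cannot fall back on the criterion of \cite{FGH2026JFA}. That framework needs the set of blocks to be, roughly, the image of an analytic map on a Banach space, which excludes totally disconnected $K$ such as the Cantor set.

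The second missing piece is the input used above: a hyperbolic $M_E(\hat y)$ for some $\hat y\in(\R^n)^N$, for every $E$ outside a discrete set that does not depend on $N$ or on the $F$-pattern $(f_j)$. Your third step does not supply it. Restricting to lines produces correlated polynomial curves $t\mapsto p(a+tb)$ in $\R^m$. These are not of the product form $\prod K_j$ handled by the earlier work. Moreover, the claim that analytic one-parameter families are obstructed only on a discrete set of energies is false in general: the circle example in Section~\ref{sec:examples} is blockwise non-hyperbolic on a whole interval of energies. For polynomial $p$ the needed fact is true, but for an elementary reason your proposal does not give. The function $\trace M_E(y)$ is affine in each letter of the word separately. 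Hence, for $E$ outside a finite set depending only on $F$ and on the constant components of $p$, its top-degree part in $y$ does not vanish. So $y\mapsto\trace M_E(y)$ is a non-constant polynomial, hence unbounded, hence exceeds $2$ in absolute value somewhere. The uniformity of this exceptional set in $N$ and $(f_j)$ is essential, since a countable union of discrete sets need not be discrete. Your proposal never identifies that set.
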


As a consequence, we can completely characterize the gap-rich subsets in dimension one.

\begin{corollary}\label{t:one_dimensional_case:alt}
    Let $X \subseteq \R$ be given. The following are equivalent:
    \begin{enumerate}[label={\rm(\alph*)},itemsep=3pt]
        \item \label{item:1Dalt:crowd} $X$ is crowded (i.e., $X$ has no isolated points)
        \item \label{item:1Dalt:lzd} $X$ is LZD
        \item \label{item:1Dalt:gaprich} $X$ is gap-rich
    \end{enumerate}
\end{corollary}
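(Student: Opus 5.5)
I would prove the corollary through the cycle \ref{item:1Dalt:crowd}$\Rightarrow$\ref{item:1Dalt:lzd}$\Rightarrow$\ref{item:1Dalt:gaprich}$\Rightarrow$\ref{item:1Dalt:crowd}.

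For \ref{item:1Dalt:crowd}$\Rightarrow$\ref{item:1Dalt:lzd}, I would use that the Zariski-closed proper subsets of $\R$ are exactly the finite sets, since a nonzero polynomial in one variable has finitely many roots. So a subset of $\R$ is Zariski-dense if and only if it is infinite. If $X$ is crowded, then for every $x \in X$ and $\eps>0$ the set $X \cap B(x,\eps)$ contains infinitely many points: if it were finite, $x$ would be isolated after shrinking $\eps$. Hence $X\cap B(x,\eps)$ is Zariski-dense in $\R$, and $X$ is LZD.

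For \ref{item:1Dalt:lzd}$\Rightarrow$\ref{item:1Dalt:gaprich}, I would apply Theorem~\ref{t:main} with $n=m=1$, $\ell=0$, $F=\emptyset$ (or $F$ the one-point set $\R^0$), $K=X$ and $p(x)=x$. Then $p[K]\times F = X$ is gap-rich. If $X=\emptyset$, it is vacuously crowded and LZD, and its gap-richness should be a vacuous instance of the definition; this degenerate convention needs a check against Definition~\ref{defn:schro_transfer_matrices}.

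For \ref{item:1Dalt:gaprich}$\Rightarrow$\ref{item:1Dalt:crowd}, I would argue by contrapositive. Suppose $x_0\in X$ is isolated, so $X\cap(x_0-\delta,x_0+\delta)=\{x_0\}$ for some $\delta>0$. Take the constant periodic potential $V\equiv x_0$. Its spectrum is $[x_0-2,x_0+2]$, with no gaps.

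Any periodic concatenation of elements of $X$ lying within $\delta$ of $V$ in the sup norm must again be identically $x_0$. So no sufficiently small perturbation can open a gap at any energy in $(x_0-2,x_0+2)$, which is uncountable and hence not contained in any discrete exceptional set. This contradicts gap-richness, provided the definition measures perturbation smallness in sup norm and allows the period to be increased; I expect this to be the case.

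The main obstacle is matching this step to the exact quantifiers of Definition~\ref{defn:schro_transfer_matrices}. If perturbations are measured blockwise, so that each block $x_j\in X$ is replaced by a nearby $x_j'\in X$, the argument is unchanged: isolation forces $x_j'=x_0$. If the definition also allows repeating $V$ over a multiple of its period before perturbing, the spectrum stays $[x_0-2,x_0+2]$, so the conclusion still holds.
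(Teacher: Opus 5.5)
Your proposal is correct and follows the paper's own proof. You go \ref{item:1Dalt:crowd}$\Rightarrow$\ref{item:1Dalt:lzd} because proper Zariski-closed subsets of $\R$ are finite, and \ref{item:1Dalt:lzd}$\Rightarrow$\ref{item:1Dalt:gaprich} by Theorem~\ref{t:main} with $n=m=1$, $\ell=0$, $p(x)=x$. Then \ref{item:1Dalt:gaprich}$\Rightarrow$\ref{item:1Dalt:crowd} is the isolated-point argument of Proposition~\ref{prop:basic_properties_gaprich} and Lemma~\ref{lem:gr_implies_perfect}; your worry about the quantifiers resolves as you expected, since the paper's word metric is the sup norm after repeating both words to a common length.

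The only slips are cosmetic. For $\ell=0$ you should take $F$ to be the one-point set, since $F=\emptyset$ would make $p[K]\times F$ empty. Also, under Definition~\ref{defn:lzd} the empty set is not LZD, a degenerate case the paper's statement tacitly excludes as well.
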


\begin{remark}\label{rmk:maintheorem}
\mbox{\,}
\begin{enumerate}[label={\rm(\arabic*)},itemsep=3pt]
    \item The setting $\ell = 0$ in the theorem should be interpreted as the setting in which $F$ is absent; hence the conclusion holds for sets of the form $p[K]$ with $K$ locally  dense in the Zariski topology and $p$ a nonconstant polynomial.

    \item Of course, $\R^m$ is locally dense in the Zariski topology on itself, so as a consequence, we see that any set that is the image of a nonconstant polynomial function is gap-rich. As mentioned above, this result by itself was already beyond the reach of \cite{FGH2026JFA}.
    
    \item The exact order of the factors above is not really important, and indeed one can  see from the proof that one can freely intersperse the coordinates corresponding to $p[K]$ and to $F$.
    More precisely, if we call any set of the form $p[K]$ a \emph{good} set if $K$ is locally dense in the Zariski topology and $p$ is a nonconstant polynomial, then the arguments herein show that any Cartesian product of good and finite sets is gap-rich. \qedhere
\end{enumerate}
\end{remark}

One can also handle the case of general analytic functions, but here the set of domains to which one can apply the result needs to be diminished somewhat to those having dense interior.\footnote{Both \textit{dense} and \textit{interior} in this statement refer to the Euclidean topology, which shall be the default topology in this article, see also Remark~\ref{rmk:euclidean_vs_zariski}.}

\begin{theorem} \label{t:analytic}
    Let $n, m \in \N$. If $K \subseteq \R^n$ has dense interior {\rm(}i.e., $\overline{K^\circ}\supseteq K${\rm)} and $f:\R^n \to \R^m$ is a nonconstant analytic function, then ${f[K]}$ is gap-rich.
\end{theorem}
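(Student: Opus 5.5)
We reduce Theorem~\ref{t:analytic} to the mechanism behind Theorem~\ref{t:main}, with analytic identity-principle arguments playing the role that Zariski density plays there. Gap-richness of a set $X$ concerns periodic concatenations $x^{(1)},\dots,x^{(N)}$ of elements of $X$. For each energy $E$ outside a discrete set, we must perturb such a concatenation within $X$ so that the monodromy $M_E$ becomes hyperbolic, i.e.\ $|\trace M_E|>2$. Here the perturbed points are $x^{(j)}=f(k_j)$ with $k_j\in K$.

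\textbf{Step 1: reduce to an open set.} Since $K$ has dense interior, each $k_j\in K$ is a limit of points of $K^\circ$. The trace of the monodromy depends continuously on the $k_j$, and gaps are open conditions. So after an arbitrarily small preliminary perturbation we may assume every $k_j$ lies in $K^\circ$. We then work on a small product of balls $U=\prod_j B(k_j,\eps)\subseteq (K^\circ)^N$.

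\textbf{Step 2: a nonconstancy condition on the trace.} Fix $E$. Consider the map $\Phi_E:(y_1,\dots,y_N)\mapsto \trace M_E(f(y_1),\dots,f(y_N))$, which is real-analytic on $(\R^n)^N$. It suffices to show that $\Phi_E$ is not identically equal to a constant in $[-2,2]$ near the base point. By the identity theorem, nonconstancy on $U$ is equivalent to nonconstancy on all of $(\R^n)^N$, since $(\R^n)^N$ is connected. If $\Phi_E$ is nonconstant, it takes values with $|\Phi_E|>2$ arbitrarily close to the base point. This holds unless $\Phi_E\equiv c$ near the base point, or unless the base point is a local maximum or minimum with value in $(-2,2)$ and all nearby values stay in $[-2,2]$.

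The second scenario needs separate handling, and this is where the paper's earlier machinery (the ``non-ahyperbolicity'' criterion of \cite{FGH2026JFA}) enters. We would show that the set of $(y,E)$ for which the monodromy is elliptic and cannot be pushed to hyperbolicity is forced into analytic subvarieties. Since $f$ is nonconstant, there is a curve $t\mapsto f(\gamma(t))$ in $f[K^\circ]$ along which one coordinate of the potential genuinely varies. Along such a curve we can use the standard rotation-number argument: varying one block while the others are fixed moves the rotation number and opens gaps.

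\textbf{Step 3: excluded energies.} The exceptional energies are those at which the relevant analytic functions of $E$ vanish identically in the $y$ variables. These are zeros of a nonzero polynomial or analytic function in $E$, hence form a discrete set.

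\textbf{Main obstacle.} The key difficulty is Step~2. We must exclude the possibility that the trace is locally constant, or locally confined to $[-2,2]$, for every choice of blocks. This amounts to showing that nonconstancy of $f$ propagates to nonconstancy of the monodromy trace for generic $E$.

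Our first attempt would be to restrict to a one-parameter analytic curve $t\mapsto f(\gamma(t))$ with nonconstant image. Then $f[\gamma]$ is an analytic image of an interval, and we would invoke the one-dimensional result, Corollary~\ref{t:one_dimensional_case:alt}. The aim is to show that such a curve set behaves like a crowded set in a single direction, so that it reduces to the polynomial/LZD case via local analytic approximation. If that fails, we fall back on the derivative-in-$t$ computation of the monodromy along the curve to verify the non-ahyperbolicity criterion directly.
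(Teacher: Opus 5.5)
Your Step~2 contains the gap. You claim that a nonconstant $\Phi_E$ takes values with $|\Phi_E|>2$ arbitrarily close to the base point. This is false whenever the base monodromy is elliptic. $\Phi_E$ is continuous, so if $|\Phi_E|<2$ at the base point, it stays in $(-2,2)$ on a whole neighborhood, whether or not it is constant. This is not a degenerate local-extremum exception; it is the generic non-hyperbolic case. No small perturbation within words of the same length can handle it, and nonconstancy of the trace only settles the case $|\trace|=2$.

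What is missing is a way to pass to longer concatenations, and the rotation-number sentence does not supply one. The paper's mechanism is Lemma~\ref{lem:hyperbolic_magic}. Take a word $\vect y^\ast$ of length $M$ (a multiple of the original length $L$), close to $\vect y^{\sharp M/L}$, whose monodromy is elliptic and does not commute with the elliptic base monodromy. Then the semigroup they generate contains a hyperbolic element. Every product in it is the monodromy of a concatenation that is still $\eps$-close to $\vect y$ in the metric $d$. To find such a $\vect y^\ast$ inside $f[K]^\star$, the identity theorem (Corollary~\ref{cor:identity_dense_int}) is applied to the entries of the commutator of $A(f(\tilde{\vect x}))$, $\tilde{\vect x}\in(\R^n)^M$, with the base monodromy, not to the trace. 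This commutator vanishes at the base point but not identically on $(\R^n)^M$. The reason is that some word $\vect w\in f[\R^n]^\star$ has hyperbolic monodromy, and a hyperbolic matrix never commutes with an elliptic one (Proposition~\ref{prop:local_to_global}).

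That global hyperbolic word is the second missing input, and your Step~3 does not provide it. You speak only of zeros of unspecified functions of $E$. These would a priori depend on the word length $N$, and a countable union of discrete sets need not be discrete. The paper fixes $x_*,y_*$ with $f(x_*)\neq f(y_*)$. It lets $S$ be the finite set of $E$ at which $T(f(x_*),E)$ or $T(f(y_*),E)$ is parabolic, or at which the two commute. The commutator is a polynomial in $E$ that is not identically zero, since $f(x_*)\sharp f(y_*)\neq f(y_*)\sharp f(x_*)$. For $E\notin S$, either one of the two matrices is hyperbolic or Lemma~\ref{lem:hyperbolic_magic} applies. This $S$ is independent of the word and its length.

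Your fallbacks do not close the gap:
\begin{enumerate}
\item The non-ahyperbolicity criterion of \cite{FGH2026JFA} fails on an interval of energies for $f(x)=(\cos x,\sin x)$, which the theorem covers (Section~\ref{sec:examples}).
\item Corollary~\ref{t:one_dimensional_case:alt} concerns subsets of $\R$. It does not apply to curves in $\R^m$ with $m\ge 2$, whose letters are correlated $m$-blocks.
\item Replacing $f$ by a polynomial changes the letter set, whereas the perturbation must stay in $f[K]^\star$.
\end{enumerate}
Your Step~1 is harmless but unnecessary, since Corollary~\ref{cor:identity_dense_int} handles dense interior directly.
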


In the remainder of this note, we set out definitions and important notions in Section~\ref{sec:key_ingredient}, establish some algebraic preliminaries in Section~\ref{sec:algebra}, and prove the main result in Section~\ref{sec:proof}. Section~\ref{sec:examples} collects some representative examples, and Section~\ref{sec:concluding_remarks} summarizes the status of the results so far, and lists a few open problems.

\section{Setting and Definitions}
\label{sec:key_ingredient}

We begin by collecting some of the relevant definitions and notations.

\begin{definition}
    A matrix $A \in \sltwor$ is called \textit{elliptic} if $|\trace{A}|<2$, \textit{hyperbolic} if $|\trace{A}|>2$, and \textit{parabolic} if $|\trace{A}|=2$ and $A \neq \pm I$.
\end{definition}

\begin{definition}
    [Word model] \label{defn:words_and_concatenation}
Given a metric space $(Y,d)$ and $k \in \N$, we denote a typical element in the product space $Y^k$ by $\vect y = y_1\cdots y_k$. We refer to $\vect y$ as a \textit{word}. The \textit{concatenation} of words $\vect y \in Y^k$ and $\vect z \in Y^\ell$ is denoted by
    \[
        \vect y \sharp \vect z= y_1 \cdots y_k z_1 \cdots z_\ell \in Y^{k + \ell}.
    \]
    We will sometimes shorten this further and simply write $\vect y \vect z $ in place of $\vect y \sharp \vect z $. We denote by $\vect y^{\sharp m}$ the result of concatenating $m$ copies of $\vect y$. The \textit{set of words over $Y$} is denoted by
    \[
        Y^\star := \bigcup_{k=1}^\infty Y^k.
    \]
    For $\vect y \in Y^k$, $\vect z \in Y^\ell$, we define 
    \begin{equation}
        d(\vect y, \vect z)
        := \sup_{1\leq j \leq  k \ell} d\left((\vect y^{\sharp \ell})_j , (\vect z^{\sharp k})_j \right).
    \end{equation}
    In particular, if $\vect y$ and $\vect z$ both belong to $Y^k$ for some $k \in \N$, then $d(\vect y, \vect z)$ simply denotes the distance between them in the uniform metric on the product space $Y^k$.
\end{definition}

We will extend functions on $Y$ to functions on $Y^\star$ as follows:
\begin{definition}\label{defn:extend_functions}

If $f:Y \to \R^m$ and $\vect y = y_1 \cdots y_k \in Y^k$, then
$f$ extends uniquely to a map $Y^\star \to (\R^m)^\star$ (which we denote by the same letter) satisfying
\[
    f(\vect x \sharp \vect y) = f(\vect x) \sharp f(\vect y).
\]
    Given a function $A:Y \times \R \rightarrow \sltwor$, there is a unique extension $A:Y^\star \times \R \to \sltwor$  such that
    \begin{equation} \label{eq:extend_functions}
            A(\vect y \sharp 
            \vect z, E) = A(\vect z,E)A(\vect y, E).
    \end{equation}
    Similarly, $B: Y \to \sltwor$, can be extended to $B:Y^\star \to \sltwor$ by $B(\vect y 
    \sharp \vect z) = B(\vect z) B(\vect y)$.
\end{definition}

\begin{definition}[Topological notions]
Let $(Y,d)$ denote a metric space and $X \subseteq Y$.
The \emph{interior} (respectively \emph{closure}) of $X$ will be denoted by $X^\circ$ (respectively, $\overline{X}$).
We say that $X$ is \emph{crowded} if it has no isolated points and \emph{perfect} if it is both closed and crowded.
We say that $X$ has \emph{dense interior} if $\overline{X^\circ} \supseteq X$.
\end{definition}

For later use, we record two straightforward observations concerning sets with dense interior.

\begin{lemma}\label{lem:dense_interior}
    Suppose that $K\subseteq \R^k$ has dense interior. 
    Then $(K \cap B(x, \eps))^\circ$ is non-empty for every $x \in K$ and $\eps>0$.
    In particular, $K$ is crowded.
\end{lemma}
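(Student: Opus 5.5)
The argument only unpacks the definitions; no earlier results are needed. Fix $x \in K$ and $\eps > 0$.

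Since $K$ has dense interior, $x \in K \subseteq \overline{K^\circ}$. The ball $B(x,\eps)$ is an open neighborhood of $x$, so it meets $K^\circ$; pick $y \in K^\circ \cap B(x,\eps)$. Both $K^\circ$ and $B(x,\eps)$ are open, so $y$ has some open neighborhood $U = K^\circ \cap B(x,\eps) \subseteq K \cap B(x,\eps)$. Hence $(K \cap B(x,\eps))^\circ \supseteq U \ni y$ is non-empty.

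For crowdedness, suppose some $x \in K$ were isolated. Then there would be an $\eps > 0$ with $K \cap B(x,\eps) = \{x\}$. By the first part this singleton would have non-empty interior in $\R^k$. That is impossible, because for $k \ge 1$ every non-empty open subset of $\R^k$ is infinite.

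The only point needing any care is the implicit assumption $k \geq 1$, which is automatic since $k \in \N$. There is no real obstacle here.
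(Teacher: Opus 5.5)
Your proof is correct and follows the paper's argument. The paper likewise uses $K \subseteq \overline{K^\circ}$ to pick $y \in K^\circ \cap B(x,\eps)$ and notes that $K^\circ \cap B(x,\eps) = (K \cap B(x,\eps))^\circ$; your explicit deduction of crowdedness, that a singleton has empty interior in $\R^k$, fills in the step the paper leaves implicit.
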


\begin{proof}
    Fix $x \in K$ and $\eps>0$ and note first that $(K \cap B(x, \eps))^\circ = K^\circ \cap B(x,\eps)$. Since $K \subseteq \overline{K^\circ}$, there exists $y \in K^\circ$ with $d(x,y)<\eps$, that is, $y \in K^\circ \cap B(x,\eps)$.
\end{proof}

\begin{lemma}\label{lem:dense_interior_union}
    If $K_1, K_2 \subseteq \R^k$ has dense interior, then so does $K_1\cup K_2$.
\end{lemma}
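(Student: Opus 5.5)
We have to show that $K_1\cup K_2 \subseteq \overline{(K_1\cup K_2)^\circ}$. The plan is to reduce directly to the hypotheses on $K_1$ and $K_2$ using monotonicity of interior and closure; no further input from earlier results is needed.

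First, since $K_1 \subseteq K_1\cup K_2$ and $K_2 \subseteq K_1 \cup K_2$, monotonicity of the interior gives
\[
K_1^\circ \cup K_2^\circ \subseteq (K_1\cup K_2)^\circ .
\]
Next, taking closures and using that the closure of a finite union is the union of the closures, we obtain
\[
\overline{K_1^\circ} \cup \overline{K_2^\circ} = \overline{K_1^\circ\cup K_2^\circ} \subseteq \overline{(K_1\cup K_2)^\circ}.
\]
Finally, the hypotheses $K_i \subseteq \overline{K_i^\circ}$ for $i=1,2$ give
\[
K_1\cup K_2 \subseteq \overline{K_1^\circ}\cup\overline{K_2^\circ} \subseteq \overline{(K_1\cup K_2)^\circ},
\]
which is exactly the claim.

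Alternatively, one can argue pointwise in the style of Lemma~\ref{lem:dense_interior}. Take $x \in K_1\cup K_2$, say $x\in K_1$, and fix $\eps>0$. Lemma~\ref{lem:dense_interior} provides a point of $K_1^\circ\cap B(x,\eps)$, and this point lies in $(K_1\cup K_2)^\circ$. Hence every ball around $x$ meets $(K_1\cup K_2)^\circ$, so $x \in \overline{(K_1\cup K_2)^\circ}$.

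There is no genuine obstacle here. The only point to keep in mind is that the interior of a union may be strictly larger than the union of the interiors. Only the inclusion $K_1^\circ \cup K_2^\circ \subseteq (K_1\cup K_2)^\circ$ is used, and it is the direction that always holds, so this causes no difficulty.
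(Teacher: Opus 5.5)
Your proof is correct and follows the paper's argument: you use $K_1^\circ \cup K_2^\circ \subseteq (K_1\cup K_2)^\circ$, take closures, and invoke $K_i \subseteq \overline{K_i^\circ}$. Your pointwise alternative via Lemma~\ref{lem:dense_interior} is also valid, though it is not needed.
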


\begin{proof}
    Taking closure on both sides of $(K_1)^\circ \cup (K_2)^\circ \subseteq (K_1\cup K_2)^\circ$, we obtain $\overline{(K_1)^\circ} \cup \overline{(K_2)^\circ} \subseteq \overline{(K_1\cup K_2)^\circ}$. Since $K_1 \cup K_2 \subseteq \overline{(K_1)^\circ} \cup \overline{(K_2)^\circ}$ by assumption, the conclusion follows. 
\end{proof}

The main notion from \cite{FGH2026JFA} is the following:

\begin{definition}
    [Gap-rich maps and sets] \label{defn:gap_rich}
    Let $Y  $ be a  metric space, and $A : Y\times \R \rightarrow \sltwor$ be a continuous map.
    We say that $A$ is \textit{gap-rich} with exceptional set $S \subseteq \R$ if $S$ is discrete, and, for all  $E \in \R \setminus S$,  $\vect x \in Y^\star$, and  $\eps > 0$, there exist $\vect y = \vect y(E,\vect x, \varepsilon) \in Y^\star$ such that 
    \begin{enumerate}[label=(\roman*)]
        \item \label{item:GRdef:close} $d(\vect x, \vect y) < \eps$, and
        \item \label{item:GRdef:hyp}  $A(\vect y, E)$ is hyperbolic.
    \end{enumerate}
    When the exact form of $S$ is unimportant, we shall omit any mention of $S$ and simply say that $A$ is gap-rich.
    We say that $X \subseteq Y$ is gap-rich for $A$ if the restriction $A|_{X \times \R}$ is gap-rich.
\end{definition}

Below we record some basic properties concerning gap-rich sets.

\begin{proposition}\label{prop:basic_properties_gaprich}
Let $Y$ be a metric space and $A : Y \times \R \to \sltwor$ be continuous.
\begin{enumerate}[label=(\alph*)]
    \item \label{item:grIsolatedPoints} If $A$ is gap-rich with exceptional set $S$, then for any isolated point $x_0$ of $Y$, $ A(x_0,E)$ is hyperbolic for all $E \in \R \setminus S$. 
    
    \item \label{item:grClosure} $X \subseteq Y$ is gap-rich with exceptional set $S$ if and only if $\overline{X}$ is.
    \item \label{item:grUnion} If $X_1 \subseteq X_2 \subseteq \cdots$ are gap-rich with exceptional set $S$, then so is $\bigcup_{i=1}^\infty X_i$.
\end{enumerate}
\end{proposition}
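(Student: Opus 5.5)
Each of the three parts follows from Definition~\ref{defn:gap_rich} by short direct arguments. For words, $d$ is the uniform distance on repetitions, and $A(\cdot,E)$ is continuous on each $Y^k$ (products of continuous matrix-valued maps). For part~\ref{item:grClosure} we also use that hyperbolicity is an open condition in $\sltwor$, since $|\trace|$ is continuous.

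For part~\ref{item:grIsolatedPoints}, let $x_0$ be isolated, so $B(x_0,\delta)=\{x_0\}$ for some $\delta>0$. Fix $E\notin S$ and apply gap-richness to the one-letter word $\vect x = x_0$ with $\eps<\delta$. This gives $\vect y\in Y^k$ with $d(x_0,\vect y)<\eps$. By definition of $d$, comparing $x_0^{\sharp k}$ with $\vect y$ letterwise, every letter $y_j$ lies within $\eps$ of $x_0$, hence $y_j = x_0$ and $\vect y = x_0^{\sharp k}$. Then $A(\vect y,E) = A(x_0,E)^k$ is hyperbolic. Since $\trace(M^k)$ has modulus $>2$ only if $|\trace M|>2$, $A(x_0,E)$ itself is hyperbolic. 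To see this, note that if $M$ is elliptic or parabolic or $\pm I$, its eigenvalues lie on the unit circle, and so do those of $M^k$.

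For part~\ref{item:grClosure}, suppose first that $X$ is gap-rich, and take $E\notin S$, $\vect x\in \overline{X}^k$ and $\eps>0$. Choose $\vect x'\in X^k$ with $d(\vect x,\vect x')<\eps/2$. Gap-richness gives $\vect y\in X^\star\subseteq \overline X^\star$ with $d(\vect x',\vect y)<\eps/2$ and $A(\vect y,E)$ hyperbolic. We need the triangle inequality for $d$ on words of different lengths; it holds because all three words can be compared after repeating them to a common length (the lcm of the lengths), and the sup is unchanged by further repetition. Conversely, suppose $\overline X$ is gap-rich, and take $\vect x\in X^\star$. We obtain $\vect y\in\overline X^k$ with $d(\vect x,\vect y)<\eps/2$ and $A(\vect y,E)$ hyperbolic. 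By continuity of $A(\cdot,E)$ on $Y^k$ and openness of hyperbolicity, there is $\vect y'\in X^k$ with $d(\vect y,\vect y')<\eps/2$ and $A(\vect y',E)$ still hyperbolic.

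For part~\ref{item:grUnion}, any word $\vect x$ over $\bigcup X_i$ uses finitely many letters, so by monotonicity $\vect x\in X_N^\star$ for some $N$. Gap-richness of $X_N$ produces $\vect y\in X_N^\star\subseteq(\bigcup X_i)^\star$ with the required properties, and the exceptional set $S$ is the same throughout. The only mildly delicate point in the whole proposition is the triangle inequality for the word metric in~\ref{item:grClosure}, which the common-length repetition argument handles.
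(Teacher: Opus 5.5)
Your proposal is correct and follows the paper's proof essentially step for step. In (a) you argue directly rather than by contradiction, using that the only words near an isolated point are its powers and that powers of non-hyperbolic matrices stay non-hyperbolic. In (b) you use an $\eps/2$ approximation together with openness of hyperbolicity, and in (c) you place the word in some $X_N$. Your unit-circle eigenvalue argument and the common-length-repetition proof of the triangle inequality for the word metric supply details that the paper leaves implicit.
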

\begin{proof}
\ref{item:grIsolatedPoints}
Assume on the contrary that $A$ is gap-rich with exceptional set $S$ and that $x_0$ is an isolated point of $Y$ such that $A(x_0,E)$ is not hyperbolic for some $E \in \R \setminus S$.
Choosing $\varepsilon>0$ so that $B(x_0,\varepsilon) = \{x_0\}$ in $Y$, it follows that the only  $\vect y \in Y^\ell$ satisfying $d(x_0,\vect y) < \varepsilon$  is $x_0^{\sharp \ell}$.
Furthermore, $A(x_0^{\sharp \ell},E) = A(x_0,E)^\ell$ is a power of a non-hyperbolic matrix and hence is itself non-hyperbolic for all $\ell$, so there is no $\vect y \in Y^\star$ with $d(x_0,\vect y) < \varepsilon$ for which $A(\vect y,E)$  is hyperbolic, which is a contradiction.
\medskip

\ref{item:grClosure} Assume first that $X$ is gap-rich with exceptional set $S$. Given $ \vect y \in \overline{X}^\star$, $E \in \R \setminus S$, and $\varepsilon>0$, first choose $\vect z \in X^\star$ with $d(\vect y , \vect z)< \varepsilon/2$.
 Since $X$ is gap-rich, we may choose $\vect w \in X^\star \subseteq \overline{X}^\star$ with $d(\vect z, \vect w)< \varepsilon/2$ for which $A(\vect w,E)$ is hyperbolic.
 Since $d(\vect y, \vect w) < \varepsilon$, this concludes the argument in this direction.
 \smallskip

Conversely, if $\overline{X}$ is gap-rich (with exceptional set $S$) and $\vect x  \in X^\star \subseteq \overline{X}^\star$, $E \in \R \setminus S$, and $\varepsilon>0$ are given, we can find $\vect y \in \overline{X}^n \subseteq \overline{X}^\star$ such that $d(\vect x, \vect y) < \varepsilon/2$ and $A(\vect y,E)$ is hyperbolic.
By the continuity assumption of $A(\cdot,E)$ and of the trace, we can then choose $\vect z \in X^n$ close enough to $\vect y$ to ensure both $d(\vect x, \vect z)< \varepsilon$ and $|\trace A(\vect z, E)| > 2$,  which finishes this direction.
 \medskip
 
\ref{item:grUnion} If $\vect x = x_1 \cdots x_m \in \left[\bigcup_{i=1}^\infty X_i \right]^\star$, then there is some $N$ such that $x_i \in X_N$ for all $i$, so we can apply gap-richness of $X_N$ to conclude. 
\end{proof}

It is important to note that we are seeking a word $\vect y$ in $Y^\star$. That is, we must perturb $\vect x$ \textit{within} the set of admissible words $Y^\star = \bigcup_{k=1}^\infty Y^k$. Hence the choice of the set $Y$ plays a critical role in the discussion.

The main example of $A : Y\times \R \rightarrow \sltwor$ that we have in mind is the transfer matrix of one-dimensional discrete Schr\"odinger operators \cite{DF2022ESO1}: 

\begin{definition}
    [Discrete 1D Schr\"odinger transfer matrices] \label{defn:schro_transfer_matrices}
    The \textit{transfer matrix} $T:\R \times \R \rightarrow \sltwor$ for the Schr\"odinger operator on $\ell^2(\Z)$, is given by 
    \begin{equation}\label{eqn:schro_transfer_matrices}
        T(x, E) = \begin{bmatrix}
            E-x & -1 \\
            1 & 0
        \end{bmatrix}. \qedhere
    \end{equation}
    In accordance with Definition~\ref{defn:extend_functions}, we also denote by $T$ the unique extension of $T$ to $\R^\star \times \R$ satisfying \eqref{eq:extend_functions}.
    As above, we then simply say that a set $X \subseteq \R^k$ is gap-rich if $T|_{X \times \R}$ is gap-rich in the sense of Definition~\ref{defn:gap_rich}.
\end{definition}

We shall now consider various choices of $Y \subseteq \R^k$, and ask whether $Y$ is gap-rich for the Schr\"odinger transfer matrix \eqref{eqn:schro_transfer_matrices}, as a map $T : Y \times \R \rightarrow \sltwor$.
This is indeed the main motivation for the discussion at hand as well as for the specific terminology.
More specifically, if $H_{\vect x} = \Delta + V_{\vect x}$ denotes the Schr\"odinger operator with periodic potential obtained by repeating the string $\vectx \in \R^k$, then  by Floquet theory (e.g.\ \cite[Section~7.2]{DF2024ESO2})
\begin{equation}\label{eqn:spec_vs_trace_periodic}
   \R \setminus \spectrum H_\vectx = \{E \in \R : |\trace T(\vectx,E)| > 2 \}.
\end{equation}
Thus, gap-richness of $T$ for a suitable class of blocks exactly corresponds to the ability to perturb (within the given allowable blocks) to open spectral gaps.

We conclude this section with a lemma showing that, in case of Schr\"odinger transfer matrices, gap-richness of $X$ forces $X$ to be crowded.

\begin{lemma}
    \label{lem:gr_implies_perfect}
    If $X \subseteq \R^k$ is gap-rich, then $X$ is crowded.
\end{lemma}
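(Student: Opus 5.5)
We argue by contraposition, using Proposition~\ref{prop:basic_properties_gaprich}\ref{item:grIsolatedPoints}. Suppose $X \subseteq \R^k$ is gap-rich with exceptional set $S$, and suppose toward a contradiction that $x_0 = (x_{0,1},\ldots,x_{0,k}) \in X$ is an isolated point of $X$. Apply part~\ref{item:grIsolatedPoints} with $Y = X$ and $A = T|_{X\times\R}$, which is continuous. It follows that $T(x_0,E)$ is hyperbolic for every $E \in \R\setminus S$. In other words, $\R \setminus S \subseteq \R \setminus \spectrum H_{x_0}$ by \eqref{eqn:spec_vs_trace_periodic}, so $\spectrum H_{x_0} \subseteq S$.

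The contradiction will come from knowing that the spectrum of the periodic operator $H_{x_0}$ is not contained in a discrete set. The cleanest route is via the discriminant. Set $\Delta(E) := \trace T(x_0,E)$. Since $T(x_0,E)$ is a product of $k$ matrices whose entries are affine in $E$, an induction shows that $\Delta$ is a real polynomial in $E$ of degree exactly $k$ with leading coefficient $1$. Indeed, the top-left entry of the product has leading term $E^k$, and every other entry has degree less than $k$.

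Hence $\Delta$ is a nonconstant continuous function, so $\Delta(E) \to +\infty$ as $E \to +\infty$. For $k$ even we also have $\Delta(E)\to+\infty$ as $E\to -\infty$; for $k$ odd we have $\Delta(E) \to -\infty$. In either case, we must rule out the possibility that $|\Delta| > 2$ everywhere outside a discrete set. The set $\{E : |\Delta(E)| \le 2\} = \Delta^{-1}([-2,2])$ is nonempty: for $k$ odd this follows from the intermediate value theorem. For $k$ even it is the one point requiring an argument, and two alternatives are available. One can cite the standard fact that the spectrum of a bounded self-adjoint operator is nonempty, which by \eqref{eqn:spec_vs_trace_periodic} gives a point with $|\Delta|\le 2$. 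Alternatively, one can note that $\spectrum H_{x_0}$ is a nonempty compact set.

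Once we have $E_0$ with $|\Delta(E_0)| \le 2$, the set $\Delta^{-1}([-2,2])$ contains a nondegenerate interval. If $|\Delta(E_0)|<2$, this follows by continuity. If $|\Delta(E_0)| = 2$, then $\Delta - \Delta(E_0)$ is a nonconstant polynomial vanishing at $E_0$. Since $\Delta$ is unbounded above near $+\infty$ and $\Delta$ is not locally constant, the intermediate value theorem yields points arbitrarily near $E_0$ where $|\Delta|<2$. More precisely, $\Delta$ must take values in $(-2,2)$ on one side of $E_0$ unless $E_0$ is a local extremum with value $\pm 2$ pointing outward. To avoid this case analysis, it is simpler to take $E_0$ to be the largest point of the nonempty compact set $\Delta^{-1}([-2,2])$. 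Then $\Delta>2$ or $\Delta < -2$ on $(E_0,\infty)$. In the first case $\Delta(E_0)=2$, and going left from $E_0$ the polynomial must eventually drop below $2$ before reaching the other end behavior, or else $\Delta^{-1}([-2,2])$ would be just the single point $E_0$. That is impossible, since $\Delta(E_0)=2$ would then be a strict local minimum. In that situation $\Delta \ge 2$ everywhere, and the spectrum would be a single point, contradicting the fact that $\spectrum H_{x_0}$ has positive Lebesgue measure.

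I expect this step to be the main obstacle. To handle it robustly, I would instead cite standard Floquet theory \cite[Section~7.2]{DF2024ESO2}: $\spectrum H_{x_0}$ is a union of $k$ nondegenerate closed intervals (bands), or equivalently $\Delta^{-1}([-2,2])$ has positive Lebesgue measure. Either way, $\spectrum H_{x_0}$ is uncountable and so cannot lie in the discrete set $S$. This contradiction shows that $X$ has no isolated points, so $X$ is crowded.
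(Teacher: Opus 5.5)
Your final argument is correct and is essentially the paper's proof: both apply part (a) of Proposition~\ref{prop:basic_properties_gaprich} at the isolated point, then invoke standard Floquet theory to see that the non-hyperbolic set $\Delta^{-1}([-2,2]) = \spectrum H_{x_0}$ is not discrete (the paper cites that all zeros of $\Delta$ are real and simple, so $|\Delta|<2$ near each zero, while you cite the band structure of the spectrum). The intermediate case analysis for $|\Delta(E_0)|=2$ is not valid as written, because nothing you have established excludes a strict local minimum of $\Delta$ with value $2$, and the appeal to positive Lebesgue measure presupposes the conclusion; you should delete it and let the Floquet citation carry that step.
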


\begin{proof}
    Arguing by contrapositive, assume that $X$ has an isolated point $\vect{x}_*$.
    By standard results (cf.~\cite[Theorem~7.2.8]{DF2024ESO2}), $\trace T(\vect x_*, E)$ is a nonconstant polynomial in $E$ with all zeros real and simple.
    In particular, the set of $E$ for which $T(\vect x_*,E)$ is non-hyperbolic is non-discrete, so $X$ is not gap-rich by Proposition~\ref{prop:basic_properties_gaprich}.\ref{item:grIsolatedPoints}.
\end{proof}

\section{Algebraic Preliminaries} \label{sec:algebra}


As indicated in Theorem~\ref{t:main}, the geometric properties of the set $K\subseteq \R^k$ play a key role in the analysis that follows. To prepare for this, we recall in this section some notions from algebraic geometry and collect some useful results. 
For the reader's convenience, we present this algebraic background in a relatively detailed fashion, with references for important facts. In this section, we shall fix $k \in \N$.

Given a set $S \subseteq \polynomial{(\R^k)}$ of polynomials, the \emph{variety} generated by $S$ is given by
\begin{equation} \label{eq:varietydef}
\Var (S):=
\{x \in \R^k : f(x) = 0 \ \forall f \in S\}.
\end{equation} 
The \emph{Zariski topology} on $\R^k$ is the unique topology on $\R^k$ for which every closed set is of the form $\Var(S)$ for some $S \subseteq \polynomial{(\R^k)}$.
Equivalently, the Zariski topology is the coarsest topology for which all polynomial functions $\R^k \to \R$ are continuous.
A set $K \subseteq \R^k$ is called \emph{Zariski-dense} if $K$ is dense in the Zariski topology.

\begin{definition}
    \label{defn:lzd}
    We say that $K \subseteq \R^k$ is \emph{locally Zariski-dense} (LZD) if $K$ is nonempty and $K \cap B(x,\eps)$ is Zariski-dense for every $x \in K$ and every $\eps>0$.
\end{definition}

\begin{remark}
    \label{rmk:euclidean_vs_zariski}
    Here and henceforth, we shall take care to distinguish between the Euclidean and Zariski topologies on $\R^k$. We adopt the convention that the Euclidean topology is to be taken as default, whereas any use of the Zariski topology will be explicitly stated. For instance, the closure $\overline{K}$ of a set $K\subseteq \R^k$ refers to the Euclidean-closure, $B(x,\eps)$ refers to the Euclidean-open ball at $x \in \R^k$ with radius $\eps>0$, and ``locally" in LZD refers to taking a Euclidean-open neighborhood of a point $x \in K$. 
\end{remark}

We record a simple but useful observation about Zariski-dense sets that will be used throughout the article. This is well-known, but we include the short proof for the reader's convenience.

\begin{proposition}\label{prop:zariskidense_and_polydet}
    A set $K \subseteq \R^k$ is Zariski-dense if and only if it is has the property that
    \begin{align}\label{eqn:pd_property}
        \text{for every $p \in \polynomial{(\R^k)}$, $p|_K \equiv 0$ implies that $p \equiv 0$.}
    \end{align}
\end{proposition}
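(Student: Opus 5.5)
The plan is to prove both implications directly from the definition of the Zariski topology, using that its closed sets are exactly the varieties $\Var(S)$ from \eqref{eq:varietydef}. The underlying observation is that the Zariski closure of $K$ is $\Var(I_K)$, where $I_K := \{p \in \polynomial(\R^k) : p|_K \equiv 0\}$. It is then enough to show that $\Var(I_K) = \R^k$ holds exactly when $I_K = \{0\}$, which is precisely \eqref{eqn:pd_property}.

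\emph{Zariski-dense implies \eqref{eqn:pd_property}.} Let $p \in \polynomial(\R^k)$ satisfy $p|_K \equiv 0$. The set $\Var(\{p\})$ is Zariski-closed and contains $K$. Since $K$ is Zariski-dense, the only Zariski-closed set containing $K$ is $\R^k$, so $\Var(\{p\}) = \R^k$. This means $p(x) = 0$ for every $x \in \R^k$, i.e.\ $p \equiv 0$ as a function. Because the paper treats polynomials as elements of $C(\R^k)$, this is already the desired conclusion. (It would also agree with the coefficientwise statement, since a real polynomial vanishing on all of $\R^k$ has zero coefficients, by induction on $k$; but that is not needed here.)

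\emph{\eqref{eqn:pd_property} implies Zariski-dense.} Let $C$ be a Zariski-closed set with $K \subseteq C$, and write $C = \Var(S)$ for some $S \subseteq \polynomial(\R^k)$. Every $f \in S$ vanishes on $C \supseteq K$, so $f|_K \equiv 0$, and \eqref{eqn:pd_property} gives $f \equiv 0$. Hence every element of $S$ is the zero function, and $\Var(S) = \R^k$ (this includes the case $S = \emptyset$). So the only Zariski-closed set containing $K$ is $\R^k$, and $K$ is Zariski-dense.

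I do not expect a real obstacle here. The only care needed is the convention that polynomials are functions, so that ``$p \equiv 0$'' means vanishing everywhere, together with the elementary fact that the Zariski closure of $K$ is the intersection of all varieties containing it.
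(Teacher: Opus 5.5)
Your proof is correct and follows essentially the same route as the paper. The forward direction rests on $\Var(\{p\})$ being a Zariski-closed set containing $K$ (the paper argues by contraposition), and the reverse direction amounts to showing that the Zariski closure $\Var(\mathbf{I}(K))$ is all of $\R^k$; you check this directly from the definition of the closed sets, where the paper cites Cox--Little--O'Shea.
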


\begin{proof}
    $(\Rightarrow)$
    Arguing by contraposition, assume that $K \subseteq \R^k$ does not satisfy \eqref{eqn:pd_property}.
    We may then choose
    \begin{align}\label{eqn:nonzero_polys_vanishing_on_k}
        p^\ast \in S_K := \{ p \in \polynomial{(\R^k)} \,:\, \text{$p$ nonconstant, $p(a)=0$ for all $a \in K$} \}
    \end{align}
    and note that $K \subseteq \Var(S_K) \subseteq \Var(\{ p^\ast \})$. 
    Since $p^\ast$ is a nonconstant polynomial, $\Var(\{p^\ast\}) \neq \R^k$ and $\Var(\{p^\ast\})$ is closed in the Zariski topology. Thus, $K$ is not Zariski-dense.  
    
    $(\Leftarrow)$
    Suppose that $K \subseteq \R^k$ satisfies \eqref{eqn:pd_property}. Denote by $\mathbf{I}(K)$ the set of all polynomials that vanish on $K$ (the ideal of $K$):
    \begin{align}\label{eqn:ideal_generated_by_k}
        \mathbf{I}(K) = \{ p \in \polynomial{(\R^k)} \,:\, p(a)=0 ~\text{for all $a \in K$} \}
        = S_K \cup \{ 0 \},
    \end{align}
    where $S_K$ is defined as in \eqref{eqn:nonzero_polys_vanishing_on_k}. Then, the Zariski-closure of $K$ is $\Var{(\mathbf{I}(K))}$ \cite[Chapter 4.4 Prop.\,1]{cox_little_oshea}, and $\mathbf{I}(K) = \{ 0 \}$ by  assumption \eqref{eqn:pd_property}. This implies that the Zariski-closure of $K$ is $\Var{(\{0 \})} = \R^k$. That is, $K$ is Zariski-dense.
\end{proof}

As a consequence of Proposition~\ref{prop:zariskidense_and_polydet}, we have a characterization of LZD sets

\begin{corollary}\label{cor:lzd_lpd_equivalence}
    A set $K\subseteq \R^k$ is LZD if and only if it has the property that
    \begin{align}\label{eqn:lpd_property}
        \text{for every $a \in K$, $\eps >0$, and $p \in \polynomial{(\R^k)}$, $p|_{K\cap B(a,\eps)} \equiv 0$ implies that $p \equiv 0$.}
    \end{align}
\end{corollary}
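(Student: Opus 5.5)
This is a direct unwinding of definitions: I would combine Definition~\ref{defn:lzd} with Proposition~\ref{prop:zariskidense_and_polydet}, applied to each localized set $K\cap B(a,\eps)$.

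\emph{Forward direction.} Assume $K$ is LZD, and fix $a \in K$, $\eps>0$, and $p \in \polynomial(\R^k)$ with $p|_{K\cap B(a,\eps)} \equiv 0$. By Definition~\ref{defn:lzd}, the set $K \cap B(a,\eps)$ is Zariski-dense. The forward implication of Proposition~\ref{prop:zariskidense_and_polydet}, applied to this set, then gives $p \equiv 0$. This is exactly \eqref{eqn:lpd_property}.

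\emph{Reverse direction.} Assume \eqref{eqn:lpd_property} holds. For each $a\in K$ and $\eps>0$, the set $K\cap B(a,\eps)$ satisfies \eqref{eqn:pd_property}. The reverse implication of Proposition~\ref{prop:zariskidense_and_polydet} then shows that $K\cap B(a,\eps)$ is Zariski-dense, which is the local condition in Definition~\ref{defn:lzd}.

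The only point needing care is the nonemptiness requirement built into Definition~\ref{defn:lzd}. If $K=\emptyset$, then \eqref{eqn:lpd_property} holds vacuously, yet $K$ is not LZD. So the equivalence should be read for nonempty $K$, or, equivalently, \eqref{eqn:lpd_property} should be understood to include the standing assumption $K\neq\emptyset$. I would add this caveat explicitly; there is no other obstacle.
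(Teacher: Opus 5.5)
Your proposal is correct and matches the paper, which states this corollary as an immediate consequence of Proposition~\ref{prop:zariskidense_and_polydet} applied to each set $K\cap B(a,\eps)$ and gives no further proof. Your empty-set caveat is also right: Definition~\ref{defn:lzd} requires $K\neq\emptyset$, while \eqref{eqn:lpd_property} holds vacuously for $K=\emptyset$, so the equivalence should be read for nonempty $K$.
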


We now turn to the problem of identifying LZD sets.
In dimension one, we can deduce a complete description of LZD sets, for we recall that the Zariski topology in dimension one can be characterized by its cardinality: the Zariski-closed sets of $\R$ are the finite sets (including the empty set) and $\R$ itself \cite[Example~1.1.1]{hartshone}. As a consequence, the Zariski-dense sets of $\R$ are precisely the infinite sets. More precisely, we have the following:

\begin{lemma}\label{lem:one_dim_lzd_equiv_perfect}
If $K \subseteq \R^k$ is LZD, then it is crowded.
If $k=1$, then the converse holds as well.
\end{lemma}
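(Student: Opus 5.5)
The plan is to prove the two directions separately. Both rest on the fact that nonzero polynomials have small zero sets: a nonzero polynomial on $\R^k$ cannot vanish on a nonempty Euclidean-open set, and a nonzero univariate polynomial has only finitely many roots.

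\emph{LZD implies crowded.} Argue by contraposition. Suppose $K$ has an isolated point $a$, and choose $\eps>0$ with $K\cap B(a,\eps)=\{a\}$. Take the polynomial $p(x)=x_1-a_1$, where $x_1$ and $a_1$ are the first coordinates of $x$ and $a$. Then $p$ vanishes on $K\cap B(a,\eps)$, yet $p\not\equiv 0$. By Corollary~\ref{cor:lzd_lpd_equivalence}, $K$ is not LZD. If $K$ is empty, it is not LZD by Definition~\ref{defn:lzd}, but it is vacuously crowded, so this case causes no trouble in this direction.

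\emph{Crowded implies LZD when $k=1$.} The natural tool is the remark preceding the lemma: the Zariski-dense subsets of $\R$ are exactly the infinite sets. Fix $a\in K$ and $\eps>0$. Because $K$ has no isolated points, every neighborhood of $a$ contains a point of $K$ other than $a$. Applying this with radii $\eps/2^j$ gives infinitely many distinct points of $K$ in $B(a,\eps)$. Alternatively, if $K\cap B(a,\eps)$ were finite, one could shrink $\eps$ below the minimal distance from $a$ to the other points, which would make $a$ isolated. Either way, $K\cap B(a,\eps)$ is infinite, hence Zariski-dense. Equivalently, a polynomial vanishing on it has infinitely many roots and is therefore $0$, which is condition~\eqref{eqn:lpd_property}.

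The only subtlety is the empty set, which is crowded but is not LZD under Definition~\ref{defn:lzd}. So the converse should be read for nonempty $K$, and I will state that explicitly. Apart from this, the argument is routine.
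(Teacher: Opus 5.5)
Your proof is correct and follows the paper's argument. For the first part, an isolated point yields a nonzero polynomial vanishing on $K\cap B(a,\eps)$; the paper phrases this via the Zariski-closed set $\Var(\{x_1-x_1^\ast,\ldots,x_k-x_k^\ast\})=\{x^\ast\}$. For $k=1$, crowdedness makes each $K\cap B(a,\eps)$ infinite, hence Zariski-dense. Your caveat about $K=\emptyset$ is also right: Definition~\ref{defn:lzd} requires nonemptiness, so the converse holds only for nonempty $K$, a point the paper's proof does not address.
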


\begin{proof}
    Arguing by contrapositive, assume $K \subseteq \R^k$ is not crowded, that is, there exists an isolated point  $x^* \in K$.
    Then for some $\varepsilon>0$, $K \cap B(x^*,\varepsilon) = \{x^*\} = \Var(\{x_1 - x_1^\ast, \cdots, x_k - x_k^\ast \})$, 
    from which it follows that $K$ is not LZD.

    If $k=1$ and $K$ is crowded, then for any $x \in K$ and $\varepsilon>0$, $K \cap B(x,\varepsilon)$ is infinite (hence Zariski-dense), implying that $K$ is LZD.
\end{proof}

The following example may be useful for the reader to keep in mind:
\begin{example}\label{eg:nice_zd_example}
    For any $k \in \N$, $\Z^k$ is Zariski-dense in $\R^k$ (e.g.~\cite[Lemma~2.1]{alon_tarsi1992}) but not LZD, since it is not crowded.
\end{example}

We postpone further examples to Section~\ref{sec:examples}, after proving the main results in Section~\ref{sec:proof}. 

Below we show two ways of constructing \textit{bigger} LZD sets from existing ones, namely, by taking Cartesian products and unions. One should contrast the situation with gap-rich sets, as it is presently unknown if taking Cartesian products or unions of gap-rich sets preserve gap-richness (compare Question~\ref{quest:gr:operations} in Section~\ref{sec:concluding_remarks}).

\begin{lemma} \label{lem:prodLZD}
    If $k_j \in \N$ and $K_j \subseteq \R^{k_j}$ is LZD for each $j=1,2,\ldots,n$, then  $K=K_1 \times \cdots \times K_n$ is LZD in $\R^{k_1+\cdots + k_n}$.
\end{lemma}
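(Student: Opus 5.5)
By induction on $n$ it suffices to treat $n=2$. Write $K = K_1 \times K_2 \subseteq \R^{k_1} \times \R^{k_2}$, and work throughout with the characterization of LZD sets in Corollary~\ref{cor:lzd_lpd_equivalence}.

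First, $K$ is nonempty because both factors are. Now fix $a = (a_1,a_2) \in K$ and $\eps>0$. If we use the max-norm on the product, then $B(a_1,\eps)\times B(a_2,\eps)$ is a neighborhood of $a$. Otherwise we pass to $\eps/\sqrt{2}$, using that Euclidean balls and products of balls are mutually nested up to a constant; this is harmless, since shrinking the neighborhood only strengthens what must be shown. So it suffices to show that $L = L_1 \times L_2$ is Zariski-dense, where $L_j = K_j \cap B(a_j,\eps')$. By Corollary~\ref{cor:lzd_lpd_equivalence} (or directly by the definition), each $L_j$ has property \eqref{eqn:pd_property} in $\R^{k_j}$.

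The core step is the standard fact that a product of Zariski-dense sets is Zariski-dense. Let $p \in \polynomial(\R^{k_1+k_2})$ vanish on $L_1\times L_2$, and write
\[
p(x,y)=\sum_\alpha c_\alpha(x)\, y^\alpha,
\]
with finitely many multi-indices $\alpha$ and $c_\alpha \in \polynomial(\R^{k_1})$. For each fixed $x \in L_1$, the polynomial $y \mapsto p(x,y)$ vanishes on $L_2$. By Proposition~\ref{prop:zariskidense_and_polydet} it is therefore the zero polynomial function on $\R^{k_2}$. A polynomial function that vanishes identically on $\R^{k_2}$ has all coefficients zero, since $\R$ is infinite, so $c_\alpha(x)=0$ for every $\alpha$. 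This holds for every $x\in L_1$, so each $c_\alpha$ vanishes on $L_1$, and Proposition~\ref{prop:zariskidense_and_polydet} again gives $c_\alpha\equiv 0$. Hence $p\equiv 0$, and Proposition~\ref{prop:zariskidense_and_polydet} shows that $L$ is Zariski-dense.

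The induction step is then immediate: $K_1\times\cdots\times K_n = (K_1\times\cdots\times K_{n-1})\times K_n$.

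I expect no real obstacle. The only points needing care are the identification of polynomial functions with their coefficients over the infinite field $\R$, which the paper's convention of treating polynomials as functions makes worth stating explicitly, and the small metric bookkeeping that replaces a Euclidean ball by a product of balls.
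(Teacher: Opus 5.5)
Your proof is correct and follows the paper's argument: reduce to $n=2$, fix the first variable and use Zariski-density of the local piece of $K_2$, then use the local piece of $K_1$. There are two small differences. The paper finishes by slicing a second time at each fixed $\widetilde{y}\in\R^{k_2}$ rather than extracting coefficient polynomials $c_\alpha$, which avoids the remark about identifying polynomial functions with their coefficients. You also explicitly shrink to $\eps/\sqrt{2}$ so that the product of balls lies inside the Euclidean ball, a bookkeeping step the paper leaves implicit by working directly with $B(x^\ast,\eps)\times B(y^\ast,\eps)$.
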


\begin{proof}
    By induction, it suffices to consider the case $n=2$.
    To that end, assume that $K_1$ and $K_2$ are LZD, and fix $(x^\ast, y^\ast) \in K_1 \times K_2 =: K$ and $\eps>0$.
    Suppose $p \in \polynomial{(\R^{k_1+k_2})}$ is a polynomial satisfying
    \begin{align}
    \begin{split}
        \label{eqn:lpd_cartesian_pdt}
        p(\widetilde{x}, \widetilde{y}) = 0 \quad \text{for all} \quad (\widetilde{x}, \widetilde{y}) \in ~ 
        &K \cap \left( B(x^\ast, \eps) \times B(y^\ast, \eps)\right) \\
        &\quad= \left( K_1 \cap B(x^\ast, \eps) \right) \times \left( K_2 \cap B(y^\ast, \eps) \right).
    \end{split}
    \end{align}

    Let us fix $\widetilde{x} \in K_1 \cap B(x^\ast, \eps)$ and consider the polynomial $q \in \polynomial{(\R^{k_2})}$ defined by $q(\widetilde{y}) = p(\widetilde{x}, \widetilde{y})$. By \eqref{eqn:lpd_cartesian_pdt}, $q$ vanishes on $K_2 \cap B(y^\ast,\eps)$. Thus the LZD property of $K_2$ implies that $q(\widetilde{y}) = 0$ for all $\widetilde{y} \in \R^{k_2}$. That is,
    \begin{align}\label{eqn:lpd_cartesian_pdt_v2}
        p(\widetilde{x}, \widetilde{y}) = 0 \quad \text{for all} \quad \widetilde{x} \in K_1 \cap B(x^\ast, \eps) \quad \text{and} \quad \widetilde{y} \in \R^{k_2}.
    \end{align}
    
    Reversing the roles of $\widetilde{x}$ and $\widetilde{y}$ in the above paragraph, and applying the LZD property of $K_1$ now gives us for each fixed $\widetilde{y} \in \R^{k_2}$, that $p(\widetilde{x}, \widetilde{y}) = 0$ for all $\widetilde{x} \in \R^{k_1}$. That is, $p$ vanishes on the whole of $\R^{k_1 + k_2}$, as required.
\end{proof}

\begin{lemma}\label{lem:unionLZD}
    If $k \in \N$ and $K_j \subseteq \R^k$ is LZD for each $j=1,2,\ldots,n$, then $K = K_1 \cup \cdots \cup K_n$ is LZD in $\R^k$.
\end{lemma}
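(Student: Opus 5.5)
The plan is to work with the polynomial characterization of LZD sets from Corollary~\ref{cor:lzd_lpd_equivalence}, as in the proof of Lemma~\ref{lem:prodLZD}. By induction on $n$ it suffices to treat $n=2$. Indeed, $K_1\cup\cdots\cup K_n = (K_1\cup\cdots\cup K_{n-1})\cup K_n$, so the general case follows once two LZD sets are handled. Nonemptiness of $K=K_1\cup K_2$ is immediate from that of $K_1$.

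For $n=2$, fix $a\in K$, $\eps>0$, and a polynomial $p\in\polynomial(\R^k)$ with $p|_{K\cap B(a,\eps)}\equiv 0$. Since $a\in K_1\cup K_2$, we may assume without loss of generality that $a\in K_1$. Then $K_1\cap B(a,\eps)\subseteq K\cap B(a,\eps)$, so $p$ vanishes on $K_1\cap B(a,\eps)$. Applying the LZD property of $K_1$, in the form \eqref{eqn:lpd_property}, at the point $a\in K_1$ gives $p\equiv 0$. Hence $K$ satisfies \eqref{eqn:lpd_property}, and Corollary~\ref{cor:lzd_lpd_equivalence} shows that $K$ is LZD.

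There is essentially no obstacle here. The one point requiring care is that the center $a$ of the ball must lie in the particular $K_j$ whose LZD property we invoke; otherwise $K_j\cap B(a,\eps)$ need not be Zariski-dense. Choosing $j$ with $a\in K_j$ resolves this. Alternatively, one can avoid induction entirely by picking directly some $j\in\{1,\ldots,n\}$ with $a\in K_j$ and running the same argument with $K_j\cap B(a,\eps)\subseteq K\cap B(a,\eps)$.
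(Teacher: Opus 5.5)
Your proof is correct and is essentially the paper's argument: reduce to $n=2$, assume without loss of generality that $a\in K_1$, note that $p$ vanishing on $K\cap B(a,\eps)$ forces $p$ to vanish on $K_1\cap B(a,\eps)$, and conclude $p\equiv 0$ from the LZD property of $K_1$. Your separate check of nonemptiness, which the definition of LZD requires, is a small extra the paper leaves implicit. Your induction-free variant, choosing $j$ with $a\in K_j$ directly, is equally valid.
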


\begin{proof}
    Again, it suffices to consider $n=2$.
    Fix $a \in K$, $\eps>0$, and a polynomial $p \in \polynomial{(\R^k)}$ with $p(x) = 0$ for all $x \in K\cap B(a, \eps)$. Assume without loss of generality, that $a \in K_1$. Since $K\cap B(a, \eps) = (K_1 \cap B(a,\eps)) \cup (K_2 \cap B(a,\eps))$, we obtain $p|_{K_1\cap B(a, \eps)} \equiv 0$. Now the LZD property of $K_1$ implies that $p$ must be the zero polynomial, as required.
\end{proof}

Lemma~\ref{lem:unionLZD} is the analogous statement to Lemma~\ref{lem:dense_interior_union} in the case of LZD sets.

Finally,  the following lemma, proved in \cite[Corollary 3.3]{EFGL2022JFA}, supplies one of the key inputs for the overall analysis.

\begin{lemma} 
    \label{lem:hyperbolic_magic}
    Suppose that $A,B \in \sltwor$ and let $G = \operatorname{SG}(A,B)$ denote the smallest semigroup containing $A$ and $B$. If $A$ and $B$ are both elliptic and do not commute, then $G$ contains a hyperbolic element.
\end{lemma}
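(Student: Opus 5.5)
The plan is to conjugate $A$ to a rotation and then use the family of traces $\trace(A^jB)$, splitting according to whether the rotation angle of $A$ is an irrational or a rational multiple of $\pi$. The rational case is where I expect the real difficulty.

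\emph{Normalization.} Since $A$ is elliptic, there is $S \in \sltwor$ with $S A S^{-1} = R_\theta$, the rotation by some $\theta \in (0,\pi)\cup(\pi,2\pi)$. Hyperbolicity, ellipticity and commutation are invariant under conjugation, so we may assume $A = R_\theta$ and write $B = \begin{bmatrix} a & b \\ c & d\end{bmatrix}$. Because $R_\theta \neq \pm I$, its commutant in $\sltwor$ is exactly $\mathsf{SO}(2)$. Hence the hypothesis that $A$ and $B$ do not commute means precisely that $B \notin \mathsf{SO}(2)$.

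\emph{Key computation.} For any rotation $R_\varphi$ one checks directly that
\[
\trace(R_\varphi B) = (a+d)\cos\varphi + (c-b)\sin\varphi = \rho\cos(\varphi - \varphi_0),
\]
where, using $ad-bc=1$,
\[
\rho^2 = (a+d)^2 + (c-b)^2 = a^2+b^2+c^2+d^2 + 2.
\]
Now $a^2+b^2+c^2+d^2 \geq 2\lvert ad-bc\rvert = 2$, with equality exactly when $B \in \mathsf{SO}(2)$ (equality forces $a=d$ and $b=-c$). Since $B \notin \mathsf{SO}(2)$, we get $\rho > 2$. Therefore the set of angles $\varphi$ with $\lvert\trace(R_\varphi B)\rvert > 2$ is a nonempty open arc around $\varphi_0$ (together with its antipodal arc).

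\emph{Irrational case.} If $\theta/\pi \notin \mathbb{Q}$, the powers $A^j = R_{j\theta}$, $j \geq 1$, are dense in $\mathsf{SO}(2)$. So some $j$ has $j\theta$ in the good arc, and then $A^jB \in G$ is hyperbolic. By symmetry, the same conclusion holds if $B$ is conjugate to an irrational rotation, or if \emph{any} elliptic element $C \in G$ that does not commute with $A$ is conjugate to an irrational rotation: we simply rerun the argument with the pair $(A, C)$.

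\emph{Rational (finite-order) case, the main obstacle.} It remains to treat the situation where $A$, $B$, and every elliptic element of $G$ not commuting with $A$ have finite order. The finitely many angles $j\theta$ need not hit the good arc, so a different idea is required. The plan is to argue by contradiction, assuming $G$ contains no hyperbolic element.

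Since $A^q = B^r = I$ for some $q, r$, we have $A^{-1} = A^{q-1}$ and $B^{-1}=B^{r-1}$. Hence $G$ is actually the group $\langle A,B\rangle$.

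Suppose first that $G$ contains a parabolic $P$ (so $P \neq \pm I$ and $\lvert \trace P\rvert = 2$). Its fixed point on $\partial\mathbb{H}$ is not fixed by both $A$ and $B$, since elliptic elements fix no boundary point. Then, for a suitable $g \in \{A, B\}$, the products $P^n g$ have $\lvert\trace(P^ng)\rvert \to \infty$, which is a direct trace computation in the normal form $P = \pm\begin{bmatrix}1&t\\0&1\end{bmatrix}$. This produces a hyperbolic element.

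Otherwise every element of $G$ is elliptic or $\pm I$. I would then invoke the classical fact that a subgroup of $\sltwor$ with no hyperbolic or parabolic elements is elementary, i.e.\ it fixes a point of $\mathbb{H}^2$. This can be proved directly: the fixed points $p \neq q$ of $A$ and $B$ are distinct, because elliptics with a common fixed point commute. One then shows that $\trace[A,B] > 2$ for noncommuting elliptics with distinct fixed points, which is the standard identity
\[
\trace[A,B] = \trace^2 A + \trace^2 B + \trace^2(AB) - \trace A\,\trace B\,\trace(AB) - 2
\]
combined with the displacement geometry. Thus $[A,B] = ABA^{-1}B^{-1} \in G$ is hyperbolic, which is the desired contradiction.

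Establishing $\trace[A,B] > 2$ cleanly in this last step is the point I expect to require the most care. In the normalization $A = R_\theta$ it reduces to showing that $2 + \sin^2\theta\,(\rho^2 - 4) > 2$, which follows from $\rho > 2$. So the key computation above should again supply the inequality.
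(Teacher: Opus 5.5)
Your proof is correct. The paper gives no argument for this lemma; it imports it from \cite[Corollary~3.3]{EFGL2022JFA}. So your write-up is a self-contained replacement rather than a variant of anything in the text.

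The normalization $A=R_\theta$ is sound. So is the identity $\rho^2=a^2+b^2+c^2+d^2+2$, with $\rho>2$ exactly when $B\notin\mathsf{SO}(2)$. The density argument for $\theta/\pi\notin\mathbb{Q}$ also works.

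In the finite-order case, most of what you wrote can be dropped. Once $A^q=B^r=I$, the positive word $ABA^{q-1}B^{r-1}=[A,B]$ lies in $G$. Substitute $\trace A=2\cos\theta$, $\trace B=a+d$ and your formula for $\trace(R_\theta B)$ into the Fricke identity. The cross terms cancel, and you get exactly
\[
\trace[A,B]=2+\sin^2\theta\,(\rho^2-4)>2 .
\]
So $[A,B]$ is hyperbolic outright. The following are all unnecessary:
\begin{itemize}
\item the parabolic subcase;
\item the appeal to elementary groups;
\item the remark on distinct fixed points and the vague ``displacement geometry'';
\item the clause about arbitrary elliptic $C\in G$;
\item the proof by contradiction.
\end{itemize}

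The identity uses only that $A$ is elliptic and $B\notin\mathsf{SO}(2)$. You can also remove the rational/irrational split entirely. For any elliptic $C$, the inverse $C^{-1}$ lies in the closure of $\{C^j : j\ge 1\}$: exactly when the angle is rational, and by density otherwise. Applying this to both $A$ and $B$ gives $[A,B]\in\overline{G}$. Since hyperbolicity is an open condition, $G$ itself then contains a hyperbolic element.

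Compared with the paper, your route gives self-containment and an explicit quantitative criterion. The citation gives the paper brevity.
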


\section{Proof of Main Results} \label{sec:proof}

In this section, we combine everything together to prove the main results.

\subsection{Technical Result}

We begin with a technical result underlying the proofs of Theorem~\ref{t:main} and~\ref{t:analytic}.

\begin{definition}

    Let $R$ be a unital subalgebra of $C(\R^n)$ (i.e.\ $R$ is a subset of $C(\R^n)$ containing the constant functions that is closed under pointwise sums and products) and $\scrK$ a nonempty collection of nonempty crowded subsets of $\R^n$.
    We say that $\scrK$ is \emph{locally $R$-determinative} if for every $f \in R$, $K \in \scrK$, $x \in K$, and $\delta>0$,
        \begin{equation*}
        \text{if $f|_{K  \cap B( x,\delta)} \equiv 0$, then $f \equiv 0$ on $\R^{n }$.} \qedhere
    \end{equation*}
\end{definition}

For a subalgebra $R$ as above, we denote by $R^{\otimes M}$ the $M$-fold tensor product of $R$ with itself, which can naturally be identified with a subspace of $C(\R^{nM})$ by sending elementary tensors to separable functions, that is,
\begin{equation}
    R^{\otimes M}  \ni f_1 \otimes \cdots \otimes f_M \mapsto f \in C(\R^{nM}), \quad f(x_1,\ldots, x_M) =\prod_{i=1}^M f_i(x_i). 
\end{equation}
Under this identification, $R^{\otimes M}$ consists of finite linear combinations of separable functions.

\begin{proposition}
    If $\scrK$ is locally $R$-determinative, then   for every $K \in \scrK$, $M \in \N$, $g \in R^{\otimes M}$, $\vect x \in K^M$ and $\delta > 0$,
    \begin{equation*}
        \text{if $g|_{K^M \cap B(\vect x,\delta)} \equiv 0$, then $g \equiv 0$ on $\R^{nM}$.} \qedhere
    \end{equation*}
\end{proposition}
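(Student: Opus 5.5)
We argue by induction on $M$, in the same spirit as the proof of Lemma~\ref{lem:prodLZD}. The case $M=1$ is exactly the definition of local $R$-determinativity. Before starting the induction we replace the ball $B(\vect x,\delta)$ in $\R^{nM}$ by a product of balls. With respect to the uniform (product) metric on $(\R^n)^M$ from Definition~\ref{defn:words_and_concatenation} this is an equality. With respect to the Euclidean metric, the product $\prod_{i=1}^M B(x_i,\delta/\sqrt{M})$ is contained in $B(\vect x,\delta)$. Either way there is $\delta'>0$ with $\prod_{i=1}^M \bigl(K\cap B(x_i,\delta')\bigr) \subseteq K^M\cap B(\vect x,\delta)$. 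It therefore suffices to show that if $g$ vanishes on $\prod_i (K\cap B(x_i,\delta'))$, then $g\equiv 0$.

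For the inductive step, write $g\in R^{\otimes M}$ as a finite sum $g(y_1,\ldots,y_M)=\sum_{j=1}^N h_j(y_1,\ldots,y_{M-1})\,f_j(y_M)$, with $h_j\in R^{\otimes(M-1)}$ and $f_j\in R$. Fix $\vect z=(y_1,\ldots,y_{M-1})\in\prod_{i<M}(K\cap B(x_i,\delta'))$. The function $y_M\mapsto g(\vect z,y_M)=\sum_j h_j(\vect z)f_j(y_M)$ is a linear combination of elements of $R$, so it lies in $R$ because $R$ is a subalgebra, in particular a vector space. It vanishes on $K\cap B(x_M,\delta')$, so local $R$-determinativity (with the point $x_M\in K$) gives $g(\vect z,y)=0$ for all $y\in\R^n$.

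Now fix an arbitrary $y\in\R^n$ and consider $\vect z\mapsto g(\vect z,y)=\sum_j f_j(y)h_j(\vect z)$. This lies in $R^{\otimes(M-1)}$ since the $f_j(y)$ are scalars. It vanishes on $\prod_{i<M}(K\cap B(x_i,\delta'))$, which is contained in $K^{M-1}\cap B((x_1,\ldots,x_{M-1}),\delta)$ when $\delta$ is chosen compatibly with the metric. By the inductive hypothesis, applied in its product-of-balls form, it vanishes identically on $\R^{n(M-1)}$. Since $y$ was arbitrary, $g\equiv 0$ on $\R^{nM}$.

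The only points needing care are bookkeeping. The first is the passage between the ball in $\R^{nM}$ and the product of balls. The cleanest fix is to run the induction with the statement ``$g$ vanishes on $\prod_i(K\cap B(x_i,\eta))$ for some $\eta>0$ implies $g\equiv0$'', which is both implied by and sufficient for the stated version. The second is that the identification of $R^{\otimes M}$ with functions is only used through finite sums of separable functions, so no well-definedness issue arises. I expect no genuine obstacle beyond this; the argument is a direct generalization of Lemma~\ref{lem:prodLZD}.
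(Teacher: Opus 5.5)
Your proof is correct and takes the paper's approach: the paper simply says the result follows by repeating the proof of Lemma~\ref{lem:prodLZD}. Your induction on $M$ is exactly that repetition. You freeze the first $M-1$ coordinates to apply local $R$-determinativity in the last one, then freeze the last coordinate and apply the inductive hypothesis, with the ball-versus-product-of-balls bookkeeping made explicit.
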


\begin{proof}
    This follows by a repetition of the argument that proved Lemma~\ref{lem:prodLZD}.
\end{proof}

\begin{proposition}\label{prop:local_to_global} 
Let $R$ be a unital subalgebra of $C(\R^n)$, $\scrK$ be a nonempty collection of nonempty crowded subsets of $\R^n$, and $f:\R^n\rightarrow \R^m$ and $A:\R^m \to \sltwor$ be continuous functions such that 
    \begin{enumerate}[label=(\roman*)]
        \item \label{hyp:ring_member} The entries of $A\circ f$ belong to $R$.

        \item \label{hyp:local_to_global} $\scrK$ is locally $R$-determinative.

        \item \label{hyp:kickstart} There exists  $\vect w \in (f[\R^n])^\star$ for which $A(\vect w)$ is hyperbolic.
    \end{enumerate}
    Then, for every $K \in \scrK$, $\eps>0$, and $\vect y \in f[K]^\star$, there exists $\vect z \in f[K]^\star$ such that $d(\vect y,\vect z)<\eps$ and $A(\vect z)$ is hyperbolic.
\end{proposition}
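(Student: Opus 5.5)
We use a ``polynomial continuation'' argument: the trace of the monodromy of a word of fixed length is an element of $R^{\otimes M}$, and the preceding proposition shows that such a function cannot vanish on a Euclidean neighborhood in $K^M$ unless it vanishes identically. The plan is to build a single function in $R^{\otimes M}$ whose nonvanishing at a point of $K^M$ near the given word forces hyperbolicity there. Hypothesis \ref{hyp:kickstart} then guarantees this function is not identically zero.

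Fix $K \in \scrK$, $\eps>0$ and $\vect y = f(x_1)\cdots f(x_k)$ with $x_i \in K$. Write $\vect w = f(u_1)\cdots f(u_j)$ with $u_i \in \R^n$ and $A(\vect w)$ hyperbolic. The key device is to compare a long power of $\vect y$ with a word containing a copy of $\vect w$.

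Set $N = j$ and consider the word $\vect y^{\sharp N}$ of length $M = kN$. Let $V = \{v \in \R^{nM}\}$ be the space of parameters. Define
\[
G(v) = \trace\big(A(f(v_1)\cdots f(v_M))\big)^2 - 4 .
\]
By \ref{hyp:ring_member} and the fact that $R$ is an algebra, $G \in R^{\otimes M}$. Hyperbolicity of $A$ at $v$ is equivalent to $G(v) > 0$. Since $G$ is a sign condition rather than a vanishing condition, I would handle this differently: show that $G$ cannot be $\le 0$ on an entire neighborhood $K^M \cap B(\vect x,\delta)$, where $\vect x = (x_1,\ldots,x_k)^{\sharp N}$.

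The main obstacle is exactly this: determinativity controls zeros, not signs. The trick I would try is the following. Suppose $G \le 0$ on $U = K^M \cap B(\vect x, \delta)$. Choose a point $\vect v \in \R^{nM}$ whose word contains $\vect w$ in such a way that $G(\vect v) > 0$. This works with $M$ a multiple of $j$: take $\vect v = (u_1,\ldots,u_j)^{\sharp k}$, so that its monodromy is $A(\vect w)^k$, which is hyperbolic. Then $G \not\equiv 0$, so by the preceding proposition $G$ does not vanish identically on $U$; but it could still be negative there.

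To upgrade to positivity, I would exploit crowdedness and look at the restriction of $G$ to $U$ more carefully. If $G<0$ on $U$ everywhere, then all such monodromies are elliptic. There are then two cases:
\begin{itemize}
    \item If two of these elliptic monodromies fail to commute, Lemma~\ref{lem:hyperbolic_magic} produces a hyperbolic product. This product is a concatenation of words in $U$, and is therefore still $\eps$-close to $\vect y$ in the metric of Definition~\ref{defn:words_and_concatenation}, provided $\delta<\eps$.
    \item If all of them commute, then the commutator entries, which lie in $R^{\otimes 2M}$, vanish on $U\times U$. By determinativity they vanish everywhere; in particular each monodromy commutes with $A(\vect w)^k$. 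With more care this should contradict ellipticity near the hyperbolic point, for instance via the vanishing of $\trace^2-4$ along a connecting family.
\end{itemize}

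The remaining case, where $G$ has zeros on $U$ (parabolic or $\pm I$ points), is handled by the same determinativity applied to $G$ itself: $G$ is not identically zero, so nearby points of $U$ with $G\ne 0$ exist, reducing matters to the two cases above. I expect the commuting case to be the delicate step. There I would first try to show that commuting with a fixed hyperbolic matrix forces a common eigenbasis, so the monodromies are hyperbolic or $\pm I$, contradicting ellipticity.
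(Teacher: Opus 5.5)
Your argument is correct. It uses the same ingredients as the paper: determinativity applied to the trace and to commutator entries, the fact that an elliptic matrix cannot commute with a hyperbolic one, and Lemma~\ref{lem:hyperbolic_magic}. The organization, however, is different.

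The paper splits according to the type of $A(\vect y)$. In the elliptic case it perturbs $\vect y^{\sharp M/L}$ to a nearby elliptic word whose monodromy fails to commute with the \emph{fixed} matrix $A(\vect y^{\sharp M/L})$; that this commutator is not identically zero is certified at a preimage of $\vect w^{\sharp M/N}$. In the parabolic case it first uses determinativity of $\trace - c$ to move to a hyperbolic or elliptic word, and then reruns the elliptic case.

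You never look at the type of $A(\vect y)$. Determinativity of $\trace^2-4$, which is positive at $(u_1,\ldots,u_j)^{\sharp k}$, yields either a hyperbolic point of $U$ or an elliptic point $\vect x'\in U$. You then find a non-commuting elliptic partner near $\vect x'$ by comparing with $A(\vect w)^k$. This gives one uniform argument with no separate parabolic case. It also never needs the monodromy of $\vect y^{\sharp N}$ itself to be elliptic. The paper's Case 2 does assert that $A(\vect y)^{M/L}$ is elliptic, which is not automatic: a power of an elliptic matrix can be $\pm I$, and then the commutator against it vanishes identically. Your route sidesteps this.

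A few points to tighten:

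\begin{itemize}
\item The condition ``$\delta<\eps$'' is not the right one. $U$ lives in the parameter space $\R^{nM}$, while closeness is measured between the words $f(v)$ and $\vect y^{\sharp N}$. Choose $\delta$ by continuity of $f$ at $x_1,\ldots,x_k$ so that $f(B(x_i,\delta))\subseteq B(f(x_i),\eps)$ for each $i$.
\item The commuting case is not delicate. Anything in $\sltwor$ commuting with a hyperbolic matrix is diagonal in its real eigenbasis, hence has $|\trace|\ge 2$ and cannot be elliptic. The ``connecting family'' idea is unnecessary.
\item It suffices to take the commutator with one fixed elliptic monodromy $A(f(\vect x'))$, whose entries lie in $R^{\otimes M}$, rather than the $R^{\otimes 2M}$ version.
\item In the case where $G$ has zeros on $U$, say explicitly that you pass to $U'=K^M\cap B(\vect x',\delta')\subseteq U$ with $G<0$ on $U'$. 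Determinativity applies there because it holds at every point of $K^M$ and every radius.
\end{itemize}
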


A few words are in order regarding the statement of Proposition~\ref{prop:local_to_global}. First, one should think of $K$ and $f$ as fixed according to the statement of Theorems~\ref{t:main} and~\ref{t:analytic}. Second, one should think of $A:\R^m \rightarrow \sltwor$ as the transfer matrix at a \textit{fixed} energy $E$, and the goal of the proposition is to verify the conditions \ref{item:GRdef:close} and \ref{item:GRdef:hyp} of Definition~\ref{defn:gap_rich} at energy $E$.

Thus, the key hypotheses lie in~\ref{hyp:local_to_global} and~\ref{hyp:kickstart}. In plain words,~\ref{hyp:local_to_global} is a requirement that the entries of the matrix $A\circ f$ satisfy a suitable ``local-to-global" property. More concretely, we intend to take $R$ as the ring $\polynomial{(\R^n)}$ and $\scrK$ as the collection of LZD sets in the polynomial case (Section~\ref{sec:poly_case}), where~\ref{hyp:local_to_global} is satisfied by Corollary~\ref{cor:lzd_lpd_equivalence}. Correspondingly, in Section~\ref{sec:analytic_case}, we shall take $R$ as the ring of real analytic functions $C^\omega(\R^n;\R) = C^\omega(\R^n)$ and $\scrK$ as the collection of sets having dense interior, where~\ref{hyp:local_to_global} is satisfied by the identity  theorem for real-analytic functions, which we will discuss in more detail later. As for~\ref{hyp:kickstart}, we remark that the word $\vect w$ lies in $(f[\R^n])^\star$, while $\vect z$ lies in $(f[K])^\star$. The existence of a word $\vect w$ satisfying~\ref{hyp:kickstart} shall be supplied using a separate argument.

Note that if $f$ were a constant function, say $f[\R^n] = f[K] = Y = \{ w \}$, then $Y^\star = \{ w^{\sharp L} : L \in \N\}$, and so $A(w)$ must be hyperbolic by condition~\ref{hyp:kickstart}.

\begin{proof}
    [Proof of Proposition~\ref{prop:local_to_global}]  
    Fix $K \in \scrK$,  $\eps>0$,  $L \in \N$, and $\vect y \in f[K]^L \subseteq (\R^m)^L$, and write
    \begin{align}
    \label{eqn:l2g_given_word}
        \vect y = y_1\cdots y_L
        \quad \text{and} \quad
        y_i = f(x_i),
        \quad \text{where} \quad
        x_i \in K \subseteq \R^n ~\text{and}~ 1\leq i\leq L.
    \end{align}
Denote $\vect x = x_1 \cdots x_L$ and, by continuity, pick $\delta > 0$ such that $f(B(x_i,\delta)) \subseteq B(y_i,\eps)$ for all $1\leq i \leq L$.

    By~\ref{hyp:kickstart}, there exists $N \in \N$ and $\vect w \in f[\R^n]^N \subseteq (\R^m)^N$ such that $A(\vect w)$ is hyperbolic. Put $M = \lcm{(L,N)}$, which gives
    %
    %
     $\vect y^{\sharp M/L}$, $\vect w^{\sharp M/N} \in f[\R^n]^M \subseteq (\R^m)^M$. 
    Define the matrix-valued map $F: (\R^n)^M \rightarrow \R^{2\times 2}$ by
    \begin{align}
    \begin{split}
        F(\widetilde{\vect x})
        &= F(\widetilde{x}_1, \cdots, \widetilde{x}_M)
        = [A(f(\widetilde{x}_1) \cdots f(\widetilde{x}_M)), A(\vect y^{\sharp M/L})] \\
        &= [A(f(\widetilde{\vect x})), A(\vect y^{\sharp M/L})]
        := A(f(\widetilde{\vect x}))A(\vect y^{\sharp M/L}) - A(\vect y^{\sharp M/L})A(f(\widetilde{\vect x})),
    \end{split}
    \end{align}
    and the map $G : (\R^n)^M \rightarrow \R$ with
    \begin{align}
        G(\widetilde{\vect x}) = \trace{A(f(\widetilde{\vect x}))}. 
    \end{align}

    We shall now split our discussion into three cases according to the value of $\trace{A(\vect y)}$.

    \textbf{\boldmath Case 1:  $|\trace{A(\vect y)}|>2$.} Then $A(\vect y)$ is by definition, hyperbolic. The conclusion of the proposition is trivially satisfied by taking $\vect z = \vect y$.
    \medskip

    \textbf{\boldmath Case 2: $|\trace{A(\vect y)}| < 2$.} Since $A(\vect y)$ is elliptic and $A(\vect w)$ is hyperbolic, we note that
    \begin{align}
        \label{eqn:l2g_kickstart_elliptic}
        A(\vect y^{\sharp M/L})
        =A(\vect y)^{M/L}~\text{is elliptic}
        \quad \text{and} \quad
        A(\vect w^{\sharp M/N}) 
        =A(\vect w)^{M/N}~\text{is hyperbolic}.
    \end{align}

    Then, with $\vect x$ as chosen in \eqref{eqn:l2g_given_word}, we have 
    \begin{align}
        F(\vect x^{\sharp M/L}) = [A(f(\vect x)^{\sharp M/L}), A(\vect y^{\sharp M/L})] = \begin{psmallmatrix}
            0 & 0 \\
            0 & 0
        \end{psmallmatrix}.
    \end{align}
    
    We claim that the restriction $F|_{K^M \cap B(\vect x^{\sharp M/L}, \delta)}$ cannot be identically zero (in $\R^{2\times 2}$). Indeed, suppose that were the case. Then~\ref{hyp:local_to_global} applied to each of the four entries of $F$ would imply that $F(\widetilde{\vect x}) = \begin{psmallmatrix}
        0 & 0 \\
        0 & 0
    \end{psmallmatrix}$ for all $\widetilde{\vect x} \in (\R^n)^M$. This is in contradiction with \eqref{eqn:l2g_kickstart_elliptic}, since $\vect y^{\sharp M/L}, \vect w^{\sharp M/N} \in f[\R^n]^M$, and elliptic and hyperbolic matrices do not commute. It is important to note that we need $\vect w^{\sharp M/N} \in f[\R^n]^M$ here, and not just that $\vect w^{\sharp M/N} \in (\R^m)^M$.

    There must therefore exist some $\vect x^\ast = x_1^\ast \cdots x_M^\ast \in K^M \cap B(\vect x^{\sharp M/L}, \delta)$ such that $F(\vect x^\ast) \neq \begin{psmallmatrix}
        0 & 0 \\
        0 & 0
    \end{psmallmatrix}$. Construct a new word $\vect y^\ast$ by applying $f$ to $\vect x^\ast$ entry-wise. That is, 
    \begin{align}\label{eqn:l2g_newword_elliptic}
        \vect y^\ast = y_1^\ast \cdots y_M^\ast = f(\vect x^\ast) = f(x_1^\ast) \cdots f(x_M^\ast) \in f[K]^M \cap B(\vect y^{\sharp M/L} ,\eps) \subseteq (\R^m)^M.
    \end{align}
    This gives us $[A(\vect y^\ast), A(\vect y^{\sharp M/L})] = F(\vect x^\ast) \neq \begin{psmallmatrix}
        0 & 0 \\
        0 & 0
    \end{psmallmatrix}$. Moreover, since $K$ is a crowded set, and ellipticity is an open condition on $G$, we may further choose $\vect x^\ast \in K^M \cap B(\vect x^{\sharp M/L}, \delta)$ such that $A(\vect y^\ast)$ is elliptic.

    Lemma~\ref{lem:hyperbolic_magic} now applies with $A = A(\vect y^{\sharp M/L})$ and $B = A(\vect y^\ast)$, to give a word $\vect z = \vect z(\vect y, \vect y^\ast) \in f[K]^{kM}$ for some $k \in \N$, in which $A(\vect z)$ is hyperbolic, and is a product of $A(\vect y^{\sharp M/L})$ and $A(\vect y^\ast)$. The latter implies that
    $d(\vect z, \vect y) = |\vect z - \vect y^{\sharp kM/L}|_\infty \leq |\vect y^\ast - \vect y^{\sharp M/L}|_\infty \leq |\vect y^\ast - \vect y^{\sharp M/L}|_2 < \eps$, where the final inequality is due to \eqref{eqn:l2g_newword_elliptic}.
\medskip

    \textbf{Case 3: \boldmath  $|\trace{A(\vect y)}| = 2$.} Then
    \begin{align}\label{eqn:l2g_kickstart_parabolic}
        |\trace{A(\vect y^{\sharp M/L})}| = 2
        \quad \text{and} \quad
        A(\vect w^{\sharp M/N}) ~\text{is hyperbolic}.
    \end{align}
    We claim that the restriction $G|_{K^M \cap B(\vect x^{\sharp M/L},\delta)}$ cannot be identically constant.
    Indeed, by~\ref{hyp:local_to_global},
    $G|_{K^M \cap B(\vect x^{\sharp M/L},\delta)}$ being constant would imply that the original map $G:(\R^n)^M \rightarrow \R$ is constant. This is in contradiction with \eqref{eqn:l2g_kickstart_parabolic} and $\vect y^{\sharp M/L}, \vect w^{\sharp M/N} \in f[\R^n]^M$.

    There must therefore exist some $\vect x^\ast = x_1^\ast \cdots x_M^\ast \in K^M \cap B(\vect x^{\sharp M/L}, \delta)$, giving $\vect y^\ast = f(\vect x^\ast) \in f[K]^M \cap B(\vect y^{\sharp M/L},\eps)$, such that $A(\vect y^\ast)$ is either (a) hyperbolic, in which we are done, or (b) elliptic, in which we may apply the arguments of the previous case to $A(\vect y^\ast)$ to obtain a word $\vect z \in f[K]^\star$ with $d(\vect z,\vect y)< \eps$ and $A(\vect z)$ hyperbolic. This completes the proof.
\end{proof}

\subsection{The Case of Polynomial Images}\label{sec:poly_case}

We now have all the pieces needed to establish our first main result, Theorem~\ref{t:main}.

The following lemma establishes the finiteness of the exceptional set of energies.

\begin{lemma}\label{lem:nonconstantTracexFinite}
        Let $p:\R^n\rightarrow \R^m$ and $F\subseteq \R^\ell$ be as in the statement of Theorem~\ref{t:main}. There exists a finite set $S \subseteq \R$ such that for every $E \in \R\setminus S$, $L \in \N$, and $\vect z = z_1 \cdots  z_L \in F^\star$, the map $\Delta_{E,\vect z, L} = \Delta : \R^{nL} \rightarrow \R$  given by
        \begin{align*}
            \Delta(\vect x) := \trace{T(p(x_1) \sharp z_1 \sharp \cdots \sharp p(x_L)\sharp z_L, E)}, 
            \quad \text{where} \quad
            \vect x = (x_1, \cdots, x_L) \in (\R^n)^L,
        \end{align*}
        is a nonconstant polynomial. In particular, there exists $\hat{\vect x} \in (\R^{n})^L$ with $|\Delta(\hat{\vect x})| > 2$.
\end{lemma}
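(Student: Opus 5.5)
The plan is to show nonconstancy of $\Delta$ by restricting it to a single line in $(\R^n)^L$ along which all $L$ blocks move together, and reading off the top-order growth of the trace along that line. Once $\Delta$ is a nonconstant polynomial, the final claim is immediate: a nonconstant real polynomial on $\R^{nL}$ is unbounded along a suitable line, so $|\Delta(\hat{\vect x})|>2$ somewhere.

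\emph{Step 1 (polynomiality and reduction to a line).} $\Delta$ is a polynomial in $\vect x$ because the entries of $T(v,E)$ are affine in $v$ and $p$ is polynomial. Since $p=(p_1,\dots,p_m)$ is nonconstant, I would fix $a,b\in\R^n$ such that at least one of the one-variable polynomials $t\mapsto p_j(a+tb)$ is nonconstant. Let $d\ge1$ be the largest of their degrees, and let $J=\{j : \deg_t p_j(a+tb)=d\}\neq\emptyset$. Then I set $x_i=a+tb$ for every $i$ and study
\[
\delta(t):=\Delta(a+tb,\dots,a+tb).
\]
It suffices to show that $\delta$ is nonconstant.

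\emph{Step 2 (leading-order expansion).} Write $T(v,E)=T(0,E)-v\,e_1e_1^{\top}$ and expand the trace multi-affinely in the site potentials. The sites whose potential is $p_j(a+tb)$ with $j\in J$ are the large sites, with potential $\sim c_j t^d$ and $c_j\ne0$. There are $L|J|\ge1$ of them. Among all monomials in $t$, the terms of maximal order should be those in which every large site contributes its factor $-v\,e_1e_1^{\top}$. Cyclically, the trace of such a product factors as
\[
\prod(-c_jt^d)\cdot\prod_{\text{gaps}} e_1^{\top}T(\vect g,E)\,e_1 .
\]
Here the second product runs over the maximal cyclic runs $\vect g$ of sites between consecutive large sites, and $e_1^{\top}T(\vect g,E)e_1$ is, up to sign, the Dirichlet determinant $\det(E-H_{\vect g})$, a monic polynomial in $E$.

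A complication is that the gap sites carrying lower-degree coordinates $p_j(a+tb)$, $j\notin J$, also depend on $t$. I would handle this by a second expansion in those variables: each gap factor is again multi-affine, and I would extract its top $t$-degree part in the same way. This produces a leading coefficient that is a product of Dirichlet determinants of shorter words. The letters of these words come only from $F$ and from the \emph{constant} coordinates of $p(a+tb)$.

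\emph{Step 3 (uniform exceptional set).} Every gap word produced in Step 2 has length at most $2m+\ell$ in sites. Its entries are drawn from the finite set $F$ together with the finitely many constants $p_j(a)$ for those $j$ with $p_j(a+tb)$ constant. Hence only finitely many gap words can occur, independently of $L$ and of $\vect z\in F^\star$. Let $S$ be the union of the Dirichlet spectra of all these words; this is a finite set. For $E\notin S$, every factor in the leading coefficient is nonzero. So $\delta$ has degree at least $d\ge1$ in $t$, and is therefore nonconstant.

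The main obstacle is Step 2: one must verify carefully that the hierarchical extraction of leading terms does not suffer cancellation. In particular, several different subsets of sites could contribute monomials of the same top $t$-degree. My first attempt to avoid this is to choose $b$ generically, so that the distinct degrees $\deg_t p_j(a+tb)$ are as separated as possible. If cancellation still threatens, the fallback is to choose $t$-dependent scalings site-by-site, which separates the sites of different degree and makes the leading term a single product of nonzero Dirichlet determinants.
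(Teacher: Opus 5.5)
Your argument is correct, and the step you flag as the main obstacle is not one. In the multi-affine expansion $\trace T=\sum_Q c_Q\prod_{k\in Q}v_k(t)$ over subsets $Q$ of sites, a monomial has $t$-degree $\sum_{k\in Q}\deg_t v_k$. This equals the maximum $D=\sum_k\deg_t v_k$ exactly when $Q$ contains every site whose potential is nonconstant in $t$; all other monomials have strictly smaller degree, so nothing can cancel against them. Summing the monomials with $Q\supseteq\{k:\deg_t v_k\ge1\}$ amounts to replacing each such site by $-v_k\,e_1e_1^{\top}$. Hence the coefficient of $t^D$ is $\pm\prod_k\mathrm{lc}(v_k)\prod_{\vect g}e_1^{\top}T(\vect g,E)e_1$, where $\vect g$ runs over the maximal cyclic runs of $t$-independent sites and empty runs contribute $1$. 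You can therefore treat all nonconstant sites as large in one step. The two-level hierarchy, the generic choice of $b$, and the site-by-site scalings are unnecessary, and your Step~3 then finishes the proof.

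The paper argues differently. It applies Lemma~\ref{lem:trace_nonvanishing_v2} to the whole word and asserts that the top $\vect x$-degree part of $\Delta$ comes from $\prod_{i,j}(E-\pi_j(z_i))\prod_{i,j}(E-p_j(x_i))$. It then takes $S$ to be the coordinates of points of $F$, together with the values of any constant $p_j$. That is shorter, but it misses exactly what your expansion captures. The remainder in Lemma~\ref{lem:trace_nonvanishing_v2} contains monomials that use all the $p$-sites but only some of the constant sites, and these have the same $\vect x$-degree.

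The true leading coefficient is your product of Dirichlet determinants. It coincides with the paper's product when every gap has length at most one (e.g.\ $\ell\le 1$ with all $p_j$ nonconstant), but not in general. For instance, take $n=m=1$, $p(x)=x$, $\ell=2$, $F=\{(0,0)\}$ and $E=1\notin\{0\}$. Then $T(0\sharp 0,1)\,T(x,1)=\begin{psmallmatrix}-1&0\\-x&-1\end{psmallmatrix}$, so $\Delta\equiv 2(-1)^L$ is constant for every $L$. Your $S$ correctly contains $\pm1$, the zeros of $e_1^{\top}T(0\sharp 0,E)e_1=E^2-1$.

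So your route, not the paper's, actually establishes the lemma as stated with a valid finite $S$. Restricting to the diagonal line $x_i=a+tb$ also conveniently reduces the degree bookkeeping to a single variable.
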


\begin{proof}
    Writing $p = (p_j)_{1\leq j \leq m}$, suppose first that all polynomials $p_1,\cdots,p_m \in \polynomial{(\R^n)}$ are nonconstant. Then we claim that $S$ may be taken to be the union of the coordinates over every element of $F$. That is,
    \begin{align}\label{eqn:exceptional_set_poly_case1}
        S = \bigcup_{z \in F} \{ \pi_1(z), \cdots, \pi_\ell(z) \},
    \end{align}
    where $\pi_j$ denotes the projection of $z \in \R^\ell$ onto the $j$-th coordinate.
    
    Indeed, let $E \in \R\setminus S$, $L\in \N$, and $z_1,\cdots z_L \in F$ be given. Since the degree of the composition of two nonconstant polynomials equals the product of the individual degrees, Lemma~\ref{lem:trace_nonvanishing_v2} implies that the leading-order term for $\Delta$ is contained in the following expression
    \begin{align}\label{eqn:leading_term_trace}
        \left( \prod_{\substack{1\leq i \leq L \\ 1 \leq j \leq \ell}} (E-\pi_j(z_i)) \right) \prod_{\substack{1\leq i \leq L \\ 1 \leq j \leq m}} (E - p_j(x_i)).
    \end{align} 
    Furthermore, observe that the factors $(E - \pi_j(z_i))$ are all non-zero, since $E \notin S$. Since $p_i$'s are nonconstant, \eqref{eqn:leading_term_trace} is a nonconstant polynomial in $\vect x \in \R^{nL}$, and thus the same is true for $\Delta$.

    In the case where some $p_j$ is a constant polynomial, say $p_j \equiv c \in \R$, one may take $S$ as the union of the right-hand side of \eqref{eqn:exceptional_set_poly_case1} with $\{c\}$. Then the argument above proceeds as before, with $(E - p_j(x_1))\cdots (E-p_j(x_L)) = (E-c)^L$ viewed as a non-zero constant factor in \eqref{eqn:leading_term_trace}.
\end{proof}

A crucial point here is that the exceptional set produced in Lemma~\ref{lem:nonconstantTracexFinite} is independent of $L$; otherwise, we would have to take a union of a countable collection of discrete sets, which could potentially be non-discrete.

\begin{proof}
    [Proof of Theorem~\ref{t:main}]

    We shall show that $Y = p[K]\times F$ is gap-rich with exceptional set $S$, where $S$ is the finite set taken from Lemma~\ref{lem:nonconstantTracexFinite}.
    
    Fix  $E \in \R\setminus S$, $\eps>0$,  $L \in \N$, and  $\vect w \in Y^L \subseteq Y^\star$. Let us write
    \begin{align}\label{eqn:given_word_poly}
    \begin{split}
        &\vect w = p(x_1)z_1 \cdots p(x_L)z_L = y_1 z_1\cdots y_L z_L, \\
        &\qquad\qquad \text{where} \quad
        x_i \in K \subseteq \R^n, 
        z_i \in F,
        \quad\text{and}\quad
        1\leq i \leq L.
    \end{split}
    \end{align}
    
    Furthermore, let us write
    \begin{align}\label{eqn:given_word_breakdown_poly}
    \begin{split}
        \vect y = y_1\cdots y_L \in p[K]^L \subseteq (\R^m)^L, \quad \text{and} \quad
        \vect z = z_1\cdots z_L \in F^L \subseteq (\R^\ell)^L.
    \end{split}
    \end{align}

    Consider the map $A = A_{E,\vect z, L} : \R^{mL} \rightarrow \sltwor$ given by
    \begin{align}\label{eqn:mapA_polycase}
        A(\widetilde{\vect y}) 
        = A(\widetilde{y}_1\cdots \widetilde{y}_L)
        = T(\widetilde{y}_1 z_1 \cdots \widetilde{y}_L z_L, E),
    \end{align}
    which we shall extend to $A: (\R^{mL})^\star \rightarrow \sltwor$ in accordance with Definition~\ref{defn:extend_functions}.

    We shall now apply Proposition~\ref{prop:local_to_global} with $K$ and $f = p$ as given by the statement of Theorem~\ref{t:main}, $\scrK$ as the collection of LZD sets of $\R^n$, $A = A_{E,\vect z,L}$ as given by \eqref{eqn:mapA_polycase}, $p[K]^L = p[K^L]$ for  ``$f[K]$" of the proposition, and $R = \polynomial{(\R^n)}$.

    We verify the hypotheses of Proposition~\ref{prop:local_to_global}. That $K$ is crowded follows from Lemma \ref{lem:one_dim_lzd_equiv_perfect}. That~\ref{hyp:ring_member} holds is clear. 
    Condition~\ref{hyp:local_to_global} is satisfied by Corollary~\ref{cor:lzd_lpd_equivalence}, and
    condition~\ref{hyp:kickstart} follows from Lemma~\ref{lem:nonconstantTracexFinite}.

    For $\eps$ and $\vect y \in p[K]^L \subseteq (p[K]^L)^\star$ as chosen at the start of the proof, Proposition~\ref{prop:local_to_global} implies that there is some $\vect y^\ast \in (p[K]^L)^{L^\ast} \subseteq (p[K]^L)^\star$, $L^\ast \in \N$, where
    \begin{align*}
        d_{(p[K]^L)^\star}(\vect y, \vect y^\ast) < \eps, 
        \quad \text{and} \quad
        A(\vect y^\ast) ~\text{is hyperbolic.}
    \end{align*}
    Thus, there exist a word $\vect w^\ast \in (p[K]\times F)^\star = Y^\star$ that satisfies 
    \begin{align}
        A(\vect y^\ast) = T(\vect w^\ast,E),
    \end{align}
    and has the property that
    \begin{align*}
        d_{Y^\star}(\vect w, \vect w^\ast)
        = d_{(p[K]^L)^\star}(\vect y, \vect y^\ast) < \eps,
        \quad \text{and} \quad
        T(\vect w^\ast,E) ~\text{is hyperbolic.}
    \end{align*}
    This completes the proof.
\end{proof}

We can now supply the proof of Corollary~\ref{t:one_dimensional_case:alt}.

\begin{proof}[Proof of Corollary~\ref{t:one_dimensional_case:alt}] 
    ~\ref{item:1Dalt:crowd} $\implies$~\ref{item:1Dalt:lzd} follows from Lemma~\ref{lem:one_dim_lzd_equiv_perfect};
    ~\ref{item:1Dalt:lzd} $\implies$~\ref{item:1Dalt:gaprich}  follows from Theorem~\ref{t:main} with $\ell =0$, $n=m=1$, and $p:\R \to \R$ given by $p(x) = x$;
    and~\ref{item:1Dalt:gaprich} $\implies$~\ref{item:1Dalt:crowd} follows from Lemma~\ref{lem:gr_implies_perfect}.
\end{proof}

\subsection{The Case of Analytic Images} \label{sec:analytic_case}

We now turn to our second main result, Theorem~\ref{t:analytic}.
We need a version of the identity theorem for real-analytic multi-variable functions. This is well-known, but we could not find a reference in exactly this form, so we supply a precise statement and proof here.

\begin{theorem}\label{thm:identity_theorem}
    Suppose $\Omega \subseteq \R^n$ is a connected domain and $f,g:\Omega \to \R$ are real-analytic functions with the property that $f|_U  \equiv g|_U$ for some open set $U \subseteq \Omega$. Then $f \equiv g$.
\end{theorem}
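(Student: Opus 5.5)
We reduce to the case $g \equiv 0$ by considering $h = f - g$, which is real-analytic on $\Omega$ and vanishes on the nonempty open set $U$. (If $U = \emptyset$ the statement is vacuous or false, so we assume $U \neq \emptyset$ as intended.) The plan is a connectedness argument. Define
\[
W := \{ x \in \Omega : \partial^\alpha h(x) = 0 \text{ for every multi-index } \alpha \in \N_0^n \}.
\]
We show that $W$ is nonempty, closed in $\Omega$, and open. Connectedness of $\Omega$ then forces $W = \Omega$, and in particular $h \equiv 0$.

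\emph{Nonempty.} Since $h$ vanishes identically on the open set $U$, all of its partial derivatives vanish there. Hence $U \subseteq W$.

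\emph{Closed.} Real-analytic functions are $C^\infty$, so each $\partial^\alpha h$ is continuous on $\Omega$. Thus $W = \bigcap_\alpha (\partial^\alpha h)^{-1}(\{0\})$ is an intersection of relatively closed subsets of $\Omega$, and so is closed in $\Omega$.

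\emph{Open.} Let $x_0 \in W$. By real-analyticity there is a ball $B(x_0, r) \subseteq \Omega$ on which $h$ equals its convergent power series
\[
h(x) = \sum_\alpha c_\alpha (x - x_0)^\alpha .
\]
We need to identify the coefficients as $c_\alpha = \partial^\alpha h(x_0)/\alpha!$. This follows by termwise differentiation of an absolutely convergent multivariable power series inside its domain of convergence. Since $x_0 \in W$, every $c_\alpha$ is zero, so $h \equiv 0$ on $B(x_0, r)$. Every point of this ball then has all derivatives zero, giving $B(x_0, r) \subseteq W$.

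The only step needing care is the openness step. It requires the standard fact that a multivariable power series may be differentiated termwise on a neighbourhood of its center, which identifies it with the Taylor series. We would cite or briefly justify this by restricting to a polydisc where $\sum |c_\alpha| \rho^{|\alpha|} < \infty$ and applying the Weierstrass M-test to the differentiated series. With that, the connectedness argument finishes the proof.
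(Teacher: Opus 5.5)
Your proof is correct and is essentially the paper's argument: both run a clopen argument on the connected set $\Omega$ for $h = f-g$, and the only analytic input is that if all partial derivatives of $h$ vanish at a point, then $h$ vanishes on a neighbourhood of that point. The difference is cosmetic: the paper uses the set where $h$ vanishes on a neighbourhood, which is trivially open and is closed via that fact (cited from Krantz--Parks), whereas your set $W$ is trivially closed and is open via the same fact. Your remark that $U \neq \emptyset$ must be assumed is also implicit in the paper's ``$B$ is nonempty''.
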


\begin{proof}
    This follows from \cite[Chapter~2]{krantz_parks}.
    More precisely, considering $h = f-g$ and 
    \[B = \{x \in \Omega : h \text{ vanishes on a neighborhood of } x \},\]
    then it is clear from the assumptions that $B$ is nonempty and open.
    Furthermore, a point $x$ in $\overline{B} \cap \Omega$ can be approximated by a sequence $\{x_n\}$ of points belonging to $B$. At each point $x_n$, it is clear that $h$ and all its partial derivatives vanish.
    By continuity, $h$ and all its partials vanish at $x$, so $h$ vanishes in a neighborhood of $x$ by \cite[Remark~2.2.4]{krantz_parks}.
    Since $B$ is nonempty, closed, and open, and $\Omega$ is connected, it follows that $B = \Omega$, that is $f\equiv g$, as desired.
\end{proof}

\begin{corollary}\label{cor:identity_dense_int}
    Suppose that $K \subseteq \R^n$ has dense interior, $f,g \in C^\omega(\R^n)$, and $f \equiv g$ on $K\cap B(x,\delta)$ for some $x \in K$ and $\delta>0$. Then $f \equiv g$ on $\R^n$.
\end{corollary}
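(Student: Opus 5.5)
The plan is to reduce the corollary to Theorem~\ref{thm:identity_theorem} applied with $\Omega = \R^n$ (which is connected) and $U$ a suitable nonempty open set on which $f$ and $g$ agree. Since $f \equiv g$ on $K \cap B(x,\delta)$, it suffices to produce a nonempty Euclidean-open set contained in $K \cap B(x,\delta)$.

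By Lemma~\ref{lem:dense_interior}, applied with $x \in K$ and $\eps = \delta$, the set $U := (K \cap B(x,\delta))^\circ = K^\circ \cap B(x,\delta)$ is nonempty. It is open in $\R^n$ by construction, and $U \subseteq K \cap B(x,\delta)$, so $f|_U \equiv g|_U$. Since $f,g$ are real-analytic on all of $\R^n$, Theorem~\ref{thm:identity_theorem} yields $f \equiv g$ on $\R^n$.

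There is essentially no obstacle here. The one point to check is that the open set $U$ is genuinely nonempty, and this is exactly what the dense-interior hypothesis supplies through Lemma~\ref{lem:dense_interior}. It is worth noting that this corollary gives hypothesis~\ref{hyp:local_to_global} of Proposition~\ref{prop:local_to_global} with $R = C^\omega(\R^n)$ and $\scrK$ the collection of sets with dense interior: take $g \equiv 0$. Each such set is nonempty and crowded by Lemma~\ref{lem:dense_interior}, provided $K$ is nonempty.
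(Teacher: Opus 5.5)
Your proposal is correct and follows the paper's proof: both use Lemma~\ref{lem:dense_interior} to get a nonempty open subset of $K\cap B(x,\delta)$ and then apply Theorem~\ref{thm:identity_theorem} on $\R^n$. The only difference is cosmetic: the paper shrinks to a ball $B(x',\delta')$ inside $(K\cap B(x,\delta))^\circ$, whereas you use that interior directly as $U$.
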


\begin{proof}
    By Lemma~\ref{lem:dense_interior}, there exist some $x' \in (K\cap B(x,\delta))^\circ$. Since $(K\cap B(x,\delta))^\circ$ is open, there exist some $\delta'> 0$ such that $B(x',\delta') \subseteq (K\cap B(x,\delta))^\circ$. As the latter is contained in $K\cap B(x,\delta)$, we infer that $f \equiv g$ on the open set $B(x',\delta')$ and now Theorem~\ref{thm:identity_theorem} applies.
\end{proof}

We are now ready to prove Theorem~\ref{t:analytic}. 

\begin{proof}[Proof of Theorem~\ref{t:analytic}]
    Since $f$ is nonconstant, we fix a pair of reference points $x_*,y_* \in \R^n$ such that $f(x_*) \neq f(y_*)$.
        Let $S$ denote the set of $E$ for which either (a) $T(f(x_*),E)$ is parabolic, (b) $T(f(y_*),E)$ is parabolic, or (c) $T(f(x_*),E)$ commutes with $T(f(y_*), E)$.
    
        \begin{claim}
            $S$ is a finite set.
        \end{claim}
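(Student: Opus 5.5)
To show that $S$ is finite, I would treat each of the three conditions (a), (b), (c) separately and show that each one holds for only finitely many $E$. The tool is the same throughout: for fixed $a,b \in \R^m$, every entry of $T(a,E)$ and $T(b,E)$ is a polynomial in $E$. So each condition amounts to a polynomial equation in $E$, and it suffices to check that this polynomial is not identically zero.

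For (a), write $a = f(x_*) = (a_1,\ldots,a_m)$. By Definition~\ref{defn:extend_functions},
\[
T(a,E) = T(a_m,E)\cdots T(a_1,E).
\]
Its trace is a monic polynomial in $E$ of degree $m \ge 1$; the leading term comes from the product of the $(1,1)$ entries $E-a_j$, as in Lemma~\ref{lem:nonconstantTracexFinite}. Hence $\trace T(a,E) = \pm 2$ has at most $2m$ solutions, so $T(a,E)$ is parabolic (or $\pm I$) for only finitely many $E$. Condition (b) is handled identically with $b = f(y_*)$.

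For (c), I would show that the commutator
\[
[T(a,E),T(b,E)]
\]
is a matrix of polynomials in $E$ that is not identically zero. Two facts about $\sltwor$ make this work: two matrices commute when one is $\pm I$, and otherwise $A$ and $B$ commute iff $B = \alpha A + \beta I$ for some scalars $\alpha,\beta$.

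\begin{itemize}
\item To find an entry of the commutator with nonzero leading term, I would expand both monodromies asymptotically as $E \to \infty$. One has
\[
T(a,E) = E^m e_{11} + E^{m-1}\bigl(\text{lower terms}\bigr) + \cdots,
\]
where $e_{11}$ is the matrix unit.
\item Since the leading parts of the two matrices are both multiples of $e_{11}$, they commute. So the commutator must be examined at subleading order, where the data $\sum a_j$ and the coordinate ordering appear.
\item This is the step I expect to be the main obstacle. A cleaner route is to compute the commutator exactly as a polynomial in $E$ and identify one coefficient that equals a nonzero multiple of $a_k - b_k$, where $k$ is the first index with $a_k \neq b_k$.
\item A plausible version: substitute $E = \lambda$ large. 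Writing $T(a,E) = P_m(E)$ in terms of the polynomials $p_j$, i.e.\ the $(1,1)$ entries of partial products, the off-diagonal entries of the commutator have the form
\[
p_{m}(a)\,p_{m-1}'(b) - p_{m}(b)\,p_{m-1}'(a),
\]
and similar expressions. Comparing the $E^{2m-2}$ coefficients gives a nonzero multiple of $\sum_j (a_j - b_j)$, or of a weighted difference.
\end{itemize}

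If this specific coefficient can vanish for particular $a \neq b$, for instance when $\sum_j a_j = \sum_j b_j$, I would fall back on the following argument. Commutation for infinitely many $E$ forces commutation for all $E$, by polynomial identity. Two polynomial transfer-matrix families commuting for every $E$ can be shown to force $a = b$. To see this, consider the Floquet solutions, or equivalently induct on $m$ by peeling off $T(a_1,E)$ and $T(b_1,E)$, comparing the top-degree terms in each entry.

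The fallback gives finiteness of the set in (c). Combining it with (a) and (b), $S$ is a finite union of finite sets, hence finite.
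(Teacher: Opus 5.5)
The structure of your proof matches the paper's. Your handling of (a) and (b) is the same as the paper's. Your fallback for (c) has the same skeleton: the entries of $[T(a,E),T(b,E)]$, with $a=f(x_*)$ and $b=f(y_*)$, are polynomials in $E$, so it suffices that they do not all vanish identically when $a\neq b$.

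The gap is that this non-vanishing is the whole content of (c), and you only assert it. Your sketch also does not close as described. Peeling off first letters from $T(a,E)T(b,E)=T(b,E)T(a,E)$ gives $a_1=b_1$, and then $T(a,E)\,T(b_2\cdots b_m,E)=T(b,E)\,T(a_2\cdots a_m,E)$. That is no longer a commutation relation between words of length $m-1$, so an induction on $m$ of the statement ``commuting for all $E$ forces $a=b$'' has nothing to apply to.

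The missing idea is the reformulation the paper uses.
\begin{itemize}
\item Since $T(b\sharp a,E)=T(a,E)T(b,E)$ and $T(a\sharp b,E)=T(b,E)T(a,E)$, identical commutation means exactly $T(b\sharp a,\cdot)=T(a\sharp b,\cdot)$.
\item For $a\neq b$, the length-$2m$ words $a\sharp b$ and $b\sharp a$ already differ in their first $m$ letters.
\item So what you need is injectivity of $\vect x\mapsto T(\vect x,\cdot)$ on words of a fixed length. The paper imports this from \cite[Lemma~5.11.8]{DF2024ESO2}.
\end{itemize}

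Your peeling idea does prove this stronger statement, by induction on the word length $n$. Write $T(\vect x,E)=M\,T(x_1,E)$ with $M=T(x_2\cdots x_n,E)$.
\begin{itemize}
\item Then $T(\vect x,E)_{12}=-M_{11}$, where $M_{11}$ is monic of degree $n-1$ in $E$.
\item Also $T(\vect x,E)_{11}=(E-x_1)M_{11}+M_{12}$ with $\deg M_{12}\le n-2$.
\item So polynomial division of $T(\vect x,E)_{11}$ by $-T(\vect x,E)_{12}$ recovers $E-x_1$, hence $x_1$.
\item Then $M=T(\vect x,E)\,T(x_1,E)^{-1}$ is determined, and you induct on $n$.
\end{itemize}

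Finally, drop your first route. The $E^{2m-2}$ coefficient you point to is not a multiple of $\sum_j(a_j-b_j)$. For instance, the $E^{2m-2}$ coefficient of the $(1,2)$ entry of $T(a,E)T(b,E)-T(b,E)T(a,E)$ is $a_1-b_1$. For $m=2$ that entry is exactly $(a_1-b_1)(E-a_2)(E-b_2)$, which vanishes identically whenever $a_1=b_1$. So no single low-order coefficient settles (c) without essentially redoing the injectivity induction.
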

        \begin{claimproof}
            Since $\trace T(f(x_*),E)$ and $\trace T(f(y_*),E )$ are nonconstant polynomial functions of $E$ (Lemma~\ref{lem:trace_nonvanishing_v2}), the set of $E$ for which either $T(f(x_*),E)$ or $T(f(y_*),E)$ is parabolic is finite. 
    
            It remains to show that the set of $E$'s satisfying (c) is also finite. Since $f(x_*) \neq f(y_*)$, it follows that
            \[f(x_*) \sharp f(y_*) \neq f(y_*) \sharp f(x_*),\]
            so \cite[Lemma~5.11.8]{DF2024ESO2} implies that there exist some $E_0 \in \C$ such that 
            \begin{align}
                T(f(x_*) \sharp f(y_*),E_0) \neq T( f(y_*) \sharp f(x_*),E_0), \label{eqn:analytic_case_commuting}
                \intertext{or equivalently,}
                P(E_0) := [T(f(x_*),E_0), T(f(y_*),E_0)] \neq 0.
            \end{align}
            Let $i,j \in \{1,2\}$ be such that $P(E_0)_{ij} \neq 0$. Then the polynomial $E \mapsto P(E)_{ij}$ can only have finitely many zeros in $\C$, and hence in $\R$. In particular, the same is true for the commutator $P$. This proves the claim.
        \end{claimproof}
    
    \begin{claim}\label{claim:nonahyp}
        For all $E \notin S$, there exists $N \in \N$ and $\vect x \in (\R^n)^N$ such that $T(f(\vect x), E)$ is hyperbolic. (We remind the reader the notation from Definition~\ref{defn:extend_functions}.)
    \end{claim}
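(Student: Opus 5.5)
We argue by cases on the types of the two matrices $A := T(f(x_*),E)$ and $B := T(f(y_*),E)$. Since $E \notin S$, neither $A$ nor $B$ is parabolic, and $[A,B] \neq 0$. Moreover $A, B \in \sltwor$ cannot equal $\pm I$: their $(2,1)$ entry equals $1$. So each of $A$ and $B$ is either elliptic or hyperbolic. The target is a word $\vect x \in \{x_*,y_*\}^N \subseteq (\R^n)^N$ with $T(f(\vect x),E)$ hyperbolic.

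\textbf{Case (i): $A$ or $B$ is hyperbolic.} Say $A$ is. Take $N=1$ and $\vect x = x_*$; then $T(f(\vect x),E) = A$ is hyperbolic. The case where $B$ is hyperbolic is symmetric, with $\vect x = y_*$.

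\textbf{Case (ii): $A$ and $B$ are both elliptic.} They do not commute because $E \notin S$ (condition (c)). Lemma~\ref{lem:hyperbolic_magic} then gives a hyperbolic element $W$ of the semigroup $\operatorname{SG}(A,B)$. Write $W$ as a finite product $W = C_N C_{N-1}\cdots C_1$ with each $C_i \in \{A,B\}$. Set $x_i = x_*$ if $C_i = A$ and $x_i = y_*$ if $C_i = B$, and let $\vect x = x_1\cdots x_N$.

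The extension rule of Definition~\ref{defn:extend_functions} gives
\[
T(f(\vect x),E) = T(f(x_N),E)\cdots T(f(x_1),E) = C_N \cdots C_1 = W,
\]
which is hyperbolic. The order reversal in the extension is the only bookkeeping to check, and the indexing above handles it.

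Most of the content sits in Lemma~\ref{lem:hyperbolic_magic} and in the definition of $S$, which was built to exclude exactly the obstructing situations. Two points need care. First, the non-parabolicity of $A$ and $B$ must also rule out $\pm I$; the lower-left entry $1$ of every transfer matrix settles this. Second, the elliptic/hyperbolic dichotomy must be exhaustive once parabolic traces are excluded; it is, because only the value $|\trace| = 2$ has been removed. Once these are confirmed, the claim follows, and it supplies hypothesis \ref{hyp:kickstart} of Proposition~\ref{prop:local_to_global} in the analytic setting.
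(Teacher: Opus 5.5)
Your argument is the paper's. If one of $A=T(f(x_*),E)$, $B=T(f(y_*),E)$ is hyperbolic you take $N=1$; otherwise both are elliptic and noncommuting and Lemma~\ref{lem:hyperbolic_magic} applies. Your handling of the order reversal from Definition~\ref{defn:extend_functions} is also correct.

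One supporting justification is wrong, though the conclusion it supports is true. Since $f(x_*)\in\R^m$, the matrix $A$ is the $m$-step transfer matrix $T(f_m(x_*),E)\cdots T(f_1(x_*),E)$, writing $f=(f_1,\ldots,f_m)$. For $m\ge 2$ its $(2,1)$ entry is not $1$ in general, and $\pm I$ really can occur. For example, with $m=2$, $f(x_*)=(a,a)$ and $E=a$, one gets $A=T(a,a)^2=-I$.

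The fix uses the fact you already stated, $[A,B]\neq 0$. Since $\pm I$ commutes with every matrix, $A=\pm I$ or $B=\pm I$ would put $E$ in $S$ by condition (c). So for $E\notin S$ neither matrix is $\pm I$. Combined with non-parabolicity, this gives $|\trace A|,|\trace B|\neq 2$, which makes the elliptic/hyperbolic dichotomy exhaustive; the paper uses this implicitly.
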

    
    \begin{claimproof}
        If $T(f(x_*),E)$ or $T(f(y_*), E)$ is hyperbolic, we can choose $N=1$, $\vect x = x_*$ or $\vect x = y_*$ as appropriate. Otherwise, by definition of $S$, it remains to consider the case where $T(f(x_*) ,E)$ and $T(f(y_*),E)$ are elliptic and noncommuting, in which case the claim follows from Lemma~\ref{lem:hyperbolic_magic}.
    \end{claimproof}
    \medskip
    
    We shall now show that $Y = f[K]$ is gap-rich with exceptional set $S$. Fix energy $E \in \R\setminus S$, $\eps>0$, and word $\vect y \in Y^\star$. We would like to apply Proposition~\ref{prop:local_to_global}, with $K$ and $f$ as given by the statement of Theorem~\ref{t:analytic}, $\scrK$ as the collection of non-empty dense interior subsets of $\R^n$, $A = A_E : \R^m \rightarrow \sltwor$ defined by $A(\widetilde{\vect y}) = T(\widetilde{\vect y}, E)$, and $R = C^\omega(\R^n;\R) = C^\omega(\R^n)$. Extend $A$ to $A:(\R^m)^\star \rightarrow \sltwor$ in accordance with Definition~\ref{defn:extend_functions}.

    We now verify the conditions of Proposition~\ref{prop:local_to_global}. 
    That $K$ is crowded follows from Lemma~\ref{lem:dense_interior}.
    That~\ref{hyp:ring_member} holds is clear.
    Condition~\ref{hyp:local_to_global} is satisfied by Corollary~\ref{cor:identity_dense_int}, and
    condition~\ref{hyp:kickstart} is satisfied by Claim~\ref{claim:nonahyp}.

    Proposition~\ref{prop:local_to_global} now applies to give $\vect z \in Y^\star$ for which $d(\vect y, \vect z)<\eps$ and $A(\vect z) = T(\vect z,E)$ is hyperbolic. This completes the proof.
\end{proof}

\section{Examples of Gap-Rich Sets} \label{sec:examples}

First, we recall the two examples that were shown to be gap-rich in \cite[Theorem 2.11-2.12]{FGH2026JFA}:

\begin{example}
    For $m, \ell \in \N$, and $b \in \R^\ell$ the sets $\{ x^{\sharp m} : x \in \R \}$ and $\R^m \times \{b\}$ are gap-rich. Observe that $\{ x^{\sharp m} : x \in \R \} = p[\R]$, where $p:\R \rightarrow \R^m$ with $p(x) = (x,\cdots,x)$, and $\R^m = q[\R^m]$, where $q:\R^m \rightarrow \R^m$ with $q(x_1,\cdots,x_m) = (x_1,\cdots,x_m)$. Thus Theorem~\ref{t:main} generalizes these two examples.
\end{example}

We now give some examples that fall under Theorem~\ref{t:main}. Below we consider the Cantor-middle thirds set $K_{1/3}$ as a representative example of a crowded set.

\begin{example}
    Since $K_{1/3}$ is crowded subset of $\R$, it is gap-rich by Corollary \ref{t:one_dimensional_case:alt}.
\end{example}

\begin{example}
    $(K_{1/3})^2 \subseteq \R^2$ is LZD by Lemma \ref{lem:prodLZD}, and hence $(K_{1/3})^2 \cup \left( (K_{1/3})^2 + (5,5) \right)$ is LZD by Lemma \ref{lem:unionLZD}. Both sets are gap-rich by Theorem \ref{t:main}.
\end{example}

\begin{example}
    The set $K_{1/3} \times \{ 0 \} \subseteq \R^2$ is not LZD because it is contained in the Zariski-closed set $\R\times \{ 0 \} = \Var{(p)}$ where $p \in \polynomial{(\R^2)}$ is given by $p(x_1,x_2) = x_2$. It is however gap-rich by Theorem \ref{t:main}.
\end{example}

\begin{example}
    The graph of $p(x)=x^2$ restricted to $K_{1/3}$, namely $\{ (x, x^2) : x \in K_{1/3} \}$, is gap-rich. In a similar vein, the twisted cubic $\{ (x,x^2,x^3) : x \in \R \}$ is gap-rich.
\end{example}

\begin{example}
    Let $p:\R^2 \rightarrow \R^3$ given by $p(x) = p(x_1,x_2) = (p_1(x), p_2(x), p_3(x)) = (x_1x_2, 3, x_2^2 - 2x_1)$ and $F = [0,1]^2 \cap \Z^2$. Then $p{[\R^2]}\times F \subseteq \R^5$ is gap-rich. Moreover, per Remark \ref{rmk:maintheorem}(3), $p_1[\R^2] \times F \times p_3[\R^2] \subseteq \R^4$ is also gap-rich.
\end{example}

Next we give some examples that fall under Theorem \ref{t:analytic}. 
A \emph{Cantorval} $K \subseteq \R$ is a compact set with dense interior and  uncountably many connected components, none of which is isolated.
The term was coined by Mendes and Oliveira in the context of sums of Cantor sets with dimensions summing close to one \cite{MendesOliveira1994nonlin}; see also \cite{MorMun2003Nonlin} and \cite{BaaGorMaz2024LMP} for more background.

\begin{example}
    Let $K \subseteq \R$ denote a Cantorval; $K^2$ has dense interior, and hence $K^2 \cup \left( K^2 + (5,5) \right)$ has dense interior by Lemma \ref{lem:dense_interior_union}, and is therefore gap-rich by Theorem \ref{t:analytic}.
\end{example}

\begin{example}
   Denote $S^1 = f[I]$, where $I = [0,2\pi]$ and $f:\R \rightarrow \R^2$ is given by $f(x) = (\cos(x),\sin(x))$. 
   Theorem~\ref{t:analytic} implies that $S^1$ is gap-rich.
   Let us note in passing that the associated transfer matrix map 
   \begin{align*}
       A(x,E)
       &= T(\sin(x),E) T(\cos(x),E) \\
       &=
       \begin{bmatrix}
           (E-\cos x)(E-\sin x) - 1 & -(E-\sin x) \\ E- \cos x & - 1
       \end{bmatrix}
   \end{align*}
   does \emph{not} satisfy the non-ahyperbolicity condition of \cite{FGH2026JFA}. Indeed, since
   \begin{align*}
       \trace{A(x,E)}
       & = E^2 - (\cos x  + \sin x)E + \cos x \sin x - 2,
   \end{align*}
   we see that there is a small $\varepsilon>0$ such that for every $E \in [1,1+\varepsilon]$, $A(x,E)$ is non-hyperbolic for every $x \in [0,2\pi]$. 
   In particular, $A$ cannot be non-ahyperbolic with exceptional discrete set $S \subseteq \R$, as $S$ must contain $[1,1+\eps]$.
   
   Furthermore, choosing a Cantorval $K \subseteq [0,2\pi]$, we also see that $f[K]$ is also gap-rich, again from Theorem~\ref{t:analytic}.
\end{example}

\begin{example}
    The graph of $f(x)=e^x$, namely $\{ (x, e^x) \in \R^2 : x \in \R \}$, is gap-rich.
\end{example}

\begin{example}
    Let $f:\R^2 \rightarrow \R^2$ be given by $f(x) = f(x_1,x_2) = (e^{x_1}, \cos{(x_2+x_1)})$. Then $f[K^2] \subseteq \R^2$ is gap-rich.
\end{example}

\section{Concluding Remarks} \label{sec:concluding_remarks}
While the present work broadens the scope of previous results significantly, it cannot cover all possible cases.
We have seen that a gap-rich set must necessarily be crowded (i.e., without isolated points) and that $X$ is gap-rich if and only if $\overline{X}$ (its closure in the Euclidean topology) is gap-rich, see Lemma~\ref{lem:gr_implies_perfect} and Proposition~\ref{prop:basic_properties_gaprich}.
Thus, it suffices to consider perfect sets and  we are left with the following problem:
\begin{question}\label{q:who_is_gaprich}
    Which perfect subsets of $\R^k$ are gap-rich?
\end{question}

On one hand, due to the absence of obvious counterexamples and the large number of positive examples identified in this article, one may guess that every crowded subset $Y \subseteq \R^k$ is gap-rich. On the other hand, to claim that ``all crowded sets are gap-rich" is to assert that the only information of $Y$ that matters for gap-richness is its \textit{cardinality}. More precisely, it would mean that knowing whether or not $Y \cap B(y,\eps)$ is infinite for every $y \in Y$ and $\eps>0$ completely determines the gap-richness of $Y$. From this perspective, it would seem unlikely that such a strong claim could hold true. At this point of writing, we opt not to turn the question above into a conjecture in either direction.

Note however, that the case $k=1$ is an exception to the above discussion, since we have demonstrated through a link to algebraic geometry that all crowded subsets of $\R$ are gap-rich (Corollary~\ref{t:one_dimensional_case:alt}).

Second, beyond the properties enumerated in Proposition~\ref{prop:basic_properties_gaprich}, it seems that the structure of the collection of gap-rich subsets of $\R^k$ is delicate. A basic question about this collection is whether it is closed under various set operations: It is clear that it is not closed under intersections since the intersection of two crowded sets need not be crowded. The main interest however lies in set operations that construct bigger sets from existing ones. More precisely, we explicitly pose the following:

\begin{question} \label{quest:gr:operations}
    If $X_1, X_2 \subseteq \R^k$ and $X_3 \subseteq \R^\ell$ are gap-rich, are any of the following sets $X_1 \cup X_2$, $X_1 + X_2$, and $X_1 \times X_3$ gap-rich?
\end{question}

The results of this article answers Question 2 partially in the affirmative, see for instance Lemmas~\ref{lem:prodLZD} and~\ref{lem:unionLZD}, and Lemma~\ref{lem:dense_interior_union} for analytic case.

\appendix
\section{Leading Order Terms of the Schr\"odinger Discriminant}

In this section, we take $T$ to be the Schr\"odinger transfer matrix (Definition~\ref{defn:schro_transfer_matrices}) and write the leading-order terms for the discriminant. 
This is well-known to experts and worked out here to keep the manuscript more self-contained.

\begin{lemma}
    [Leading-order term of $\trace{T}$] \label{lem:trace_nonvanishing_v2}
    For any $E \in \R$ and $\vect x = x_1 \sharp \cdots \sharp x_n \in \R^n$, $n \in \N$, we have 
    \begin{align}
        \trace{T(\vect x, E)} = \prod_{i=1}^n (E-x_i) + p(E,x_1,\cdots,x_n),
    \end{align}
    where $p$, viewed as a polynomial in $x_1,\cdots,x_n$, has degree at most $n-1$.
\end{lemma}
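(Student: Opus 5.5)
We argue by induction on $n$, and in fact prove a slightly stronger statement that records all four entries of the transfer matrix. The claim is that
\[
T(\vect x, E) = \begin{bmatrix} \prod_{i=1}^n (E-x_i) + a_n & b_n \\ c_n & d_n \end{bmatrix},
\]
where $a_n$, $b_n$, $c_n$, $d_n$ are polynomials in $(E,x_1,\ldots,x_n)$. Viewed as polynomials in $x_1,\ldots,x_n$, we require $\deg a_n \le n-2$, $\deg b_n \le n-1$, $\deg c_n \le n-1$ and $\deg d_n \le n-2$. We use the convention that degree $\le -1$ means the polynomial is zero, and we allow constants to have degree $0$.

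The base case $n=1$ is read off directly from \eqref{eqn:schro_transfer_matrices}. There $a_1=0$, $b_1=-1$, $c_1=1$ and $d_1=0$, which meet the required bounds. For $n=2$ one multiplies out and finds $a_2=-1$, $b_2=-(E-x_2)$, $c_2=E-x_1$ and $d_2=-1$. These satisfy the bounds $0,1,1,0$, so the pattern is consistent.

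For the inductive step, \eqref{eq:extend_functions} gives $T(x_1\cdots x_{n+1},E)=T(x_{n+1},E)\,T(x_1\cdots x_n,E)$. The top-left entry of this product is $(E-x_{n+1})\bigl(\prod_{i\le n}(E-x_i)+a_n\bigr) - c_n$. This equals $\prod_{i\le n+1}(E-x_i)$ plus the error $(E-x_{n+1})a_n - c_n$, whose degree in $\vect x$ is at most $\max(n-1,n-1)=n-1$. The remaining entries are handled the same way. The top-right entry is $(E-x_{n+1})b_n - d_n$, of degree $\le n$. The bottom row of the product is just the top row of $T(x_1\cdots x_n,E)$. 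Its first entry has degree $\le n$, as required of $c_{n+1}$, and its second entry is $b_n$, of degree $\le n-1$, as required of $d_{n+1}$. So every bound propagates.

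Finally, we take the trace. It equals $\prod_{i=1}^n (E-x_i) + a_n + d_n$, and $\deg(a_n+d_n)\le n-2\le n-1$, which proves the lemma. The only step needing care is choosing the strengthened inductive hypothesis, specifically the degree bound on the bottom-right entry. Without it the trace error could not be controlled, because the bottom-right entry of the product is inherited from the previous top-right entry. Once that hypothesis is in place, the rest is bookkeeping.
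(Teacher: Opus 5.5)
Your proof is correct and is essentially the paper's argument: induction on $n$ with a strengthened hypothesis that controls all four entries of $T(\vect x,E)$. The paper bounds all four error terms uniformly by degree $n-1$, and this already closes the induction, so your finer diagonal bounds of $n-2$ are not needed for the lemma, although they do give the sharper trace error bound $n-2$; also, your multiplication order $T(x_{n+1},E)\,T(x_1\cdots x_n,E)$ is the one consistent with \eqref{eq:extend_functions}, whereas the paper's write-up multiplies in the opposite order, which is harmless for the degree count.
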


\begin{proof}
    We shall prove a slightly stronger claim, namely, that for every $E \in \R$ and $n \in \N$, one has
    \begin{align}\label{eqn:transfer_matrix_overall_structure_v2}
        T(\vect x, E) = \begin{bmatrix}
            \prod_{i=1}^n (E-x_i) + p_{n-1}     & q_{n-1} \\
            r_{n-1}                             & s_{n-1}
        \end{bmatrix},
    \end{align}
    where $p_{n-1}$, $q_{n-1}$, $r_{n-1}$, and $s_{n-1}$ are polynomials of degree at most $n-1$ in the variables $x_1,\cdots, x_n$. We shall suppress the dependence on $E$, as it is treated as a fixed constant in this proof.

    We proceed by induction on $n$. The base case $n=1$ follows from Definition~\ref{defn:schro_transfer_matrices}. For the inductive step, let us assume that \eqref{eqn:transfer_matrix_overall_structure_v2} is valid up to some $n-1\geq 1$. Then, for some polynomials $p_{n-2}$, $q_{n-2}$, $r_{n-2}$, $s_{n-2}$ in $x_1,\cdots,x_{n-1}$ of degree at most $n-2$, we have
    \begin{alignat}{3}
        T(\vect x, E) 
        &= T(x_1\sharp\cdots \sharp x_n, E)  \nonumber\\
        &= T(x_1\sharp \cdots \sharp x_{n-1}, E) T(x_n, E) \nonumber\\
        &= \begin{bmatrix}
            \prod_{i=1}^{n-1} (E-x_i) + p_{n-2}     & q_{n-2} \\
            r_{n-2}                                 & s_{n-2}
        \end{bmatrix} \begin{bmatrix}
            E-x_n     &-1 \\
            1         &0
        \end{bmatrix} \qquad
        &&\parbox{10em}{By the inductive \\hypothesis.} \nonumber\\
        &= \begin{bmatrix}
            \prod_{i=1}^n (E-x_i) + p_{n-1}     & q_{n-1} \\
            r_{n-1}                             & s_{n-1}
        \end{bmatrix},
    \end{alignat}
    where $p_{n-1}$, $q_{n-1}$, $r_{n-1}$, $s_{n-1}$ are polynomials in $x_1,\cdots,x_n$ of degree at most $n-1$. Thus \eqref{eqn:transfer_matrix_overall_structure_v2} is verified, and the conclusion of the Lemma follows.
\end{proof}

\bibliographystyle{amsalpha}
\bibliography{bibilography}

\end{document}